\documentclass[twoside,12pt]{article}
\usepackage[T1]{fontenc}
\usepackage[utf8]{inputenc}
\usepackage{mathrsfs}
\usepackage{amssymb,amsmath,mathrsfs,amsthm}
\usepackage{mathtools}
\usepackage{graphicx}
\usepackage{color}
\usepackage[top=2cm,bottom=2cm,left=2cm,right=2cm]{geometry}
\usepackage{float,caption}
\usepackage{microtype}

\input{epsf}

\newcommand{\bd}{\begin{description}}
\newcommand{\ed}{\end{description}}
\newcommand{\bi}{\begin{itemize}}
\newcommand{\ei}{\end{itemize}}
\newcommand{\be}{\begin{enumerate}}
\newcommand{\ee}{\end{enumerate}}
\newcommand{\beq}{\begin{equation}}
\newcommand{\eeq}{\end{equation}}
\newcommand{\beqs}{\begin{eqnarray*}}
\newcommand{\eeqs}{\end{eqnarray*}}

\definecolor{DarkGreen}{rgb}{0.2,0.6,0.3}

\newtheorem{theorem}{Theorem}[section]

\newtheorem{lemma}[theorem]{Lemma}
\newtheorem{definition}[theorem]{Definition}
\newtheorem{corollary}[theorem]{Corollary}

\allowdisplaybreaks

\newcommand{\oR}{\overline{\mathrm{R}}}
\newcommand{\oM}{\overline{\mathrm{M}}}
\newcommand{\oRM}{\overline{\mathrm{RM}}}
\newcommand{\cK}{\mathcal{K}}
\newcommand{\cS}{\mathcal{S}}
\newcommand{\cG}{\mathcal{G}}
\newcommand{\cH}{\mathcal{H}}
\newcommand{\bB}{\mathfrak{B}}
\newcommand{\Imin}{I_*}
\newcommand{\hmin}{h_*}
\newcommand{\Jt}{J_t}
\newcommand{\blambda}{\boldsymbol{\lambda}}
\newcommand{\lambdamin}{\lambda_*}

\begin{document}

\title{\textbf{Ramsey multiplicity for ordered graphs\footnote{Supported by the National Natural Science Foundation of China (Nos.~12471329 and 12061059).}}}

\author{Mengya He\footnote{School of Mathematics and Statistics, Qinghai Normal University, Xining, Qinghai 810008, China. {\tt hmy8536@163.com}},\ \ Yaping Mao\footnote{Academy of Plateau Science and Sustainability and School of Mathematics and Statistics, Qinghai Normal University, Xining, Qinghai 810008, China. {\tt yapingmao@outlook.com}},\ \ Bing Wei\footnote{Department of Mathematics, University of Mississippi, University,  MS 38677, USA. {\tt bwei@olemiss.edu}},\ \ Qinghong Zhao\footnote{Corresponding author: School of Mathematical Sciences, Huaqiao University, Quanzhou 362021, China. {\tt qzhao@hqu.edu.cn}}}
\date{}
\maketitle
\begin{abstract}
Let \(\cG_1,\ldots,\cG_k\) be fixed vertex-ordered graphs, each containing
at least one edge. The ordered Ramsey number
\(\oR(\cG_1,\ldots,\cG_k)\) is the least integer \(N\) such that every
\(k\)-edge-coloring of the ordered complete graph \(\cK_N\) contains an
order-preserving copy of \(\cG_i\) in color \(i\) for some \(i\in[k]\).
For positive weights
\(\blambda=(\lambda_1,\ldots,\lambda_k)\), let
\(\oM_{\blambda}(n;\cG_1,\ldots,\cG_k)\) denote the minimum weighted number of correctly
colored, order-preserving copies of the target graphs over all
\(k\)-edge-colorings of \(\cK_n\). When \(\blambda=\bf{1}\), \(\oM_{\bf{1}}(n;\cG_1,\ldots,\cG_k)=\oM(n;\cG_1,\ldots,\cG_k)\) is called the ordered Ramsey multiplicity. 
In this paper, we first establish the amplification inequality
\[
\oM_{\blambda}(n;\cG_1,\ldots,\cG_k)
\ge
\oM_{\blambda}(t;\cG_1,\ldots,\cG_k)
\frac{\binom{n}{\hmin}}{\binom{t}{\hmin}},
\]
where $h_i=v(\cG_i),\hmin=\min_{i\in[k]}h_i$, and $n\ge t\ge\oR(\cG_1,\ldots,\cG_k)$. 
Let $\cS_{r,s}$ be the ordered star whose center has $r-1$ leaves to its left and $s-1$ leaves to its right, and
let $\bB_m$ be the family of all ordered perfect matchings on $[2m]$ containing the edge $\{1,2m\}$. We apply the
amplification inequality to obtain the multiplicity lower bounds for ordered stars and ordered perfect matchings. We then obtain the upper bound
$\oM_{\boldsymbol\lambda}
(n;\cS_{r_1,s_1},\cS_{r_2,s_2})
\le
\min\{\lambda_1 B_{h_1}(n),\lambda_2 B_{h_2}(n)\}$ by constructions,
where $B_{h_i}(n):=
\binom{\lfloor n/2\rfloor}{h_i}
+
\binom{\lceil n/2\rceil}{h_i}$ and $h_i=r_i+s_i-1$ for $i\in [2]$. We also derive a random-coloring upper bound for ordered stars and prove
\[\oM(n; \bB_m,\bB_m)
\le
\binom{n}{2m}
\frac{(2m-2)!}{2^{2m-2}(m-1)!}.\]
Finally, we establish a regularity-based lifting theorem for ordered colorings.
\\[2mm]
{\bf Keywords:} Ramsey multiplicity; Regularity lemma; Ordered star; Ordered matching \\[2mm]
{\bf AMS subject classification 2020:} 05C15; 05C30; 05C35; 05C55.
\end{abstract}

\section{Introduction}

Ramsey theory studies when a prescribed monochromatic structure is
unavoidable in every edge-coloring of a sufficiently large complete graph. For a positive integer $k$, write $[k]=\{1,\ldots,k\}$. Given graphs \(G_1,\ldots,G_k\), the Ramsey number
\(R(G_1,\ldots,G_k)\) is the smallest integer \(N\) such that every
\(k\)-edge-coloring of \(K_N\) contains a copy of \(G_i\) in color \(i\) for
some \(i\in[k]\). The classical Ramsey theorem~\cite{Ramsey} and the
monograph~\cite{GrahamRothschildSpencer} provide general background on this
topic.

Once the existence of a monochromatic copy is guaranteed, it is natural to
ask how many such copies must occur. For a graph \(G\) without isolated
vertices, the multiplicity \(M(G;n)\) is defined as the minimum number of
monochromatic copies of \(G\) over all red-blue edge-colorings of \(K_n\).
The corresponding Ramsey multiplicity is obtained by considering the host
graph with order equal to the Ramsey number. The first result on
multiplicity was obtained by Goodman~\cite{Goodman}, who determined the
minimum number of monochromatic triangles. Harary and Prins~\cite{HararyPrins}
introduced the notion of Ramsey multiplicity and studied it systematically.
The early results and open problems were surveyed by Burr and
Rosta~\cite{BurrRosta}. The threshold multiplicity for paths and cycles~\cite{ConlonFoxSudakovWeiOdd,
ConlonFoxSudakovWeiEven, Huang}, off-diagonal multiplicity
\cite{HydeLeeNoel,MossNoel}, and computational approaches
\cite{ParczykPokuttaSpiegelSzabo} have also attracted considerable attention in recent years. Further results on multiplicities of graphs, including stars and complete graphs, can be found in \cite{FranekRodl,Jacobson}.

An ordered graph is a pair $\cG=(G,<_{\cG})$, where $<_{\cG}$ is a linear order on $V(G)$.  The graph $\cK_n$ is the complete graph on $[n]$ with its natural order. An \emph{order-preserving copy} of an ordered graph $\cH$ in $\cG$ is an injective map $\varphi:V(\cH)\to V(\cG)$ that preserves the vertex order and maps every edge of $\cH$ to an edge of $\cG$. The ordered
Ramsey number \(\oR(\cG_1,\ldots,\cG_k)\) is the smallest integer \(N\) such
that every \(k\)-edge-coloring of the ordered complete graph \(\cK_N\)
contains an order-preserving copy of \(\cG_i\) in color \(i\) for some
\(i\in[k]\). Ordered Ramsey numbers have been studied extensively. Choudum and
Ponnusamy~\cite{ChoudumPonnusamy} determined ordered Ramsey numbers for
ordered stars and monotone paths. Balko, Cibulka, Kr\'al, and Kyn\v{c}l
\cite{BalkoCibulkaKralKyncI} and Conlon, Fox, Lee, and Sudakov
\cite{ConlonFoxLeeSudakov} developed general results for ordered Ramsey
numbers. Further results on ordered paths, matchings, and related ordered
structures can be found in
\cite{BalkoJelinekValtr, BalkoPoljak, CoxStolee, MilansStoleeWest, NeidingerWest}.

These developments led us to study the Ramsey multiplicity for ordered graphs. In an off-diagonal problem, the target ordered graphs may have different orders, so their numbers of copies may grow at different rates as the host graph grows. Also, after restricting a coloring to a smaller ordered complete graph, some target graphs may no longer occur. Motivated
by these observations, we introduce a weighted Ramsey multiplicity for ordered graphs and study its asymptotic behavior.

Fix ordered graphs $\cG_1,\ldots,\cG_k$, each containing at least one edge.  For a coloring $\chi:E(\cK_n)\to[k]$, let $N_i(\chi;\cG_i)$ be the number of order-preserving copies of $\cG_i$ whose edges all have the designated color $i$. Set
\(h_i:=v(\cG_i)\), \(e_i:=e(\cG_i)\), \(\hmin:=\min_{i\in[k]}h_i, 
\Imin:=\{i:h_i=\hmin\}\). Let $\blambda=(\lambda_1,\ldots,\lambda_k)\in(0,\infty)^k$.
\begin{definition}\label{def:weighted-multiplicity}
The \emph{weighted ordered multiplicity function} is
\[
\oM_{\blambda}(n;\cG_1,\ldots,\cG_k)
:=\min_{\chi:E(\cK_n)\to[k]}
\sum_{i=1}^{k}\lambda_iN_i(\chi;\cG_i).
\]
If $R=\oR(\cG_1,\ldots,\cG_k)$, the corresponding \emph{weighted threshold multiplicity} is
\[
\oRM_{\blambda}(\cG_1,\ldots,\cG_k)
:=\oM_{\blambda}(R;\cG_1,\ldots,\cG_k).
\]
For $\blambda=\boldsymbol{1}$ we omit the subscript and write $\oM$ and $\oRM$. In the diagonal case, we use the abbreviations $\oR_k(\cG)$ and $\oM_k(n;\cG)$.
\end{definition}

In this paper, we first establish an amplification inequality comparing weighted
ordered multiplicities for host graphs of different orders. It follows
that
\[
\frac{\oM_{\blambda}(n;\cG_1,\ldots,\cG_k)}
     {\binom{n}{\hmin}}
\]
is not a decreasing sequence and converges to a positive finite limit. Hence,
\[
\oM_{\blambda}(n;\cG_1,\ldots,\cG_k)
=\Theta(n^{\hmin}).
\]
We also give a general upper bound obtained from random colorings. For
two-sided ordered stars, we combine the amplification inequality with a
balanced two-interval coloring to obtain lower and upper bounds on their
multiplicity. We then apply the same method to the family
\(\mathfrak B_m\) of ordered perfect matches containing the edge
\(\{1,2m\}\), and obtain the corresponding bounds and a limiting density.
Finally, we use the regularity lemma to find large ordered clusters and show that every correctly colored copy in the reduced coloring gives rise to many order-preserving copies of the corresponding target graph in the original coloring.

\section{Ordered multiplicity and amplification}\label{sec:general}

If $X$ is finite, then $\binom{X}{q}$ is the family of its $q$-element subsets.  We use $v(G)$ and $e(G)$ for the numbers of vertices and edges of a graph $G$, and adopt the convention $\binom{x}{q}=0$ when $q<0$ or $q>x$. An \emph{interval} is a set $I$ such that $x<y<z$ and $x,z\in I$ imply $y\in I$.  Thus $B_1<\cdots<B_t$ denotes consecutive intervals with every vertex of $B_i$ preceding every vertex of $B_j$ for $i<j$.
Since a finite linear order has no nontrivial order-preserving automorphism, it follows that an $h$-element subset of an ordered host supports at most one order-preserving copy of a fixed $h$-vertex ordered graph.  We count copies by their vertex sets.

Let $R=\oR(\cG_1,\ldots,\cG_k)$. For $t\ge R$, define the visible-index set
\begin{equation}\label{eq:Jt-definition}
\Jt:=\{i\in[k]:h_i\le t\}.
\end{equation}
It is nonempty because $R\ge\hmin$.

\begin{theorem}\label{thm:weighted-amplification}
Let $R\le t\le n$.  Every coloring $\chi:E(\cK_n)\to[k]$ satisfies
\begin{equation}\label{eq:weighted-master}
\sum_{i\in\Jt}\lambda_iN_i(\chi;\cG_i)
\binom{n-h_i}{t-h_i}
\ge
\oM_{\blambda}(t;\cG_1,\ldots,\cG_k)\binom{n}{t}.
\end{equation}
Equivalently,
\begin{equation}\label{eq:weighted-normalized-master}
\sum_{i\in\Jt}\lambda_iN_i(\chi;\cG_i)
\frac{\binom{t}{h_i}}{\binom{n}{h_i}}
\ge
\oM_{\blambda}(t;\cG_1,\ldots,\cG_k).
\end{equation}
Consequently,
\begin{equation}\label{eq:weighted-total-amplification}
\oM_{\blambda}(n;\cG_1,\ldots,\cG_k)
\ge
\oM_{\blambda}(t;\cG_1,\ldots,\cG_k)
\frac{\binom{n}{\hmin}}{\binom{t}{\hmin}}.
\end{equation}
Moreover, every coloring has an index $i\in\Jt$ for which
\begin{equation}\label{eq:weighted-color-alternative}
N_i(\chi;\cG_i)
\ge
\frac{\oM_{\blambda}(t;\cG_1,\ldots,\cG_k)}{|\Jt|\lambda_i}
\frac{\binom{n}{h_i}}{\binom{t}{h_i}}.
\end{equation}
\end{theorem}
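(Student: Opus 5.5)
The plan is to double count over all $t$-element subsets $T\in\binom{[n]}{t}$. Each such $T$, with the order inherited from $[n]$, is an ordered complete graph isomorphic to $\cK_t$. The restriction $\chi|_T$ is therefore a $k$-coloring of $\cK_t$, and the definition of $\oM_{\blambda}(t;\cdot)$ gives
\[
\sum_{i=1}^k\lambda_iN_i(\chi|_T;\cG_i)\ge \oM_{\blambda}(t;\cG_1,\ldots,\cG_k).
\]
For $i\notin\Jt$ we have $h_i>t$, so $N_i(\chi|_T;\cG_i)=0$ and the sum on the left may be restricted to $i\in\Jt$. We then sum this inequality over all $\binom{n}{t}$ choices of $T$, which produces the right-hand side of \eqref{eq:weighted-master}.

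For the left-hand side we use the convention stated just before the theorem: copies are counted by their vertex sets. A correctly colored order-preserving copy of $\cG_i$ in $\chi$ with vertex set $S$, $|S|=h_i$, is a correctly colored copy in $\chi|_T$ exactly when $S\subseteq T$. There are $\binom{n-h_i}{t-h_i}$ such sets $T$. Hence
\[
\sum_T N_i(\chi|_T;\cG_i)=N_i(\chi;\cG_i)\binom{n-h_i}{t-h_i},
\]
which gives \eqref{eq:weighted-master}. The normalized form \eqref{eq:weighted-normalized-master} follows from the identity
\[
\binom{n-h}{t-h}\Big/\binom{n}{t}=\binom{t}{h}\Big/\binom{n}{h},
\]
which holds because both sides count pairs $S\subseteq T$ with $|S|=h$ and $|T|=t$.

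For \eqref{eq:weighted-total-amplification}, apply \eqref{eq:weighted-normalized-master} to an optimal coloring $\chi$ of $\cK_n$. We then need $\binom{t}{h_i}/\binom{n}{h_i}\le \binom{t}{\hmin}/\binom{n}{\hmin}$ for every $i\in\Jt$, since then the left side of \eqref{eq:weighted-normalized-master} is at most
\[
\frac{\binom{t}{\hmin}}{\binom{n}{\hmin}}\sum_{i}\lambda_iN_i(\chi;\cG_i)=\frac{\binom{t}{\hmin}}{\binom{n}{\hmin}}\,\oM_{\blambda}(n;\cG_1,\ldots,\cG_k),
\]
and rearranging gives the claim. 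The needed inequality is the one step requiring a small check. The ratio
\[
\frac{\binom{t}{h}}{\binom{n}{h}}=\prod_{j=0}^{h-1}\frac{t-j}{n-j}
\]
is nonincreasing in $h$ for $h\le t$, because each factor lies in $[0,1]$ when $t\le n$. Since $h_i\ge\hmin$ for every $i$, the inequality follows. This monotonicity is also the reason the bound naturally involves $\hmin$.

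Finally, \eqref{eq:weighted-color-alternative} follows by pigeonhole. The left side of \eqref{eq:weighted-normalized-master} is a sum of $|\Jt|$ nonnegative terms, so some term is at least $\oM_{\blambda}(t;\cG_1,\ldots,\cG_k)/|\Jt|$. Dividing that term by $\lambda_i>0$ and rearranging gives the stated bound. The main obstacle is only the bookkeeping: the restriction to $\Jt$, the counting of copies by vertex sets, and the monotonicity check above. The hypothesis $t\ge R$ is not needed for the inequality itself; it only ensures that $\oM_{\blambda}(t;\cdot)>0$, so that the bound is meaningful.
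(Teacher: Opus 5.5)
Your proposal is correct and follows the paper's route: double counting over $t$-subsets with the restriction to $\Jt$, the identity $\binom{n-h}{t-h}/\binom{n}{t}=\binom{t}{h}/\binom{n}{h}$, monotonicity of $\binom{t}{h}/\binom{n}{h}$ in $h$ to pass to $\hmin$, and pigeonhole for the final alternative. The only cosmetic difference is that you substitute an optimal coloring of $\cK_n$ directly, whereas the paper proves the bound for an arbitrary coloring and then minimizes.
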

\begin{proof}
Let $R \le t \le n$, $J_t = \{i\in [k] : h_i \le t\}$, and fix an arbitrary edge-coloring
$\chi:E(K_n)\to [k]$. We count weighted pairs $(X,F)$, where
$X\subseteq [n], |X|=t$ is a $t$-vertex subset, $F$ is an $i$-colored copy of $\mathcal{G}_i$ with $i\in J_t$ fully contained in $X$.
First, fix the subset $X$. Restrict the coloring $\chi$ to $X$ to obtain $\chi|_X:E(K_t)\to [k]$.
By Definition~\ref{def:weighted-multiplicity}, each restriction has weighted target count at least $\oM_{\blambda}(t;\cG_1,\ldots,\cG_k)$. We have
$$
\sum_{i\in J_t}\lambda_i N_i(\chi|_X;\mathcal{G}_i) \ge 
\oM_{\blambda}(t;\cG_1,\ldots,\cG_k).
$$
Indices $i\notin J_t$ satisfy $h_i>t$, so no copy of $\mathcal{G}_i$ can fit inside $X$, i.e., $N_i(\chi|_X;\mathcal{G}_i)=0$. Sum over all $\binom{n}{t}$ distinct $t$-element subsets $X$, we have
\[\sum_{\substack{X\subseteq [n]\\|X|=t}} \sum_{i\in J_t}\lambda_i N_i(\chi|_X;\mathcal{G}_i) \ge \binom{n}{t}\cdot \oM_{\blambda}(t;\cG_1,\ldots,\cG_k).\]
Now count weighted pairs $(X,F)$. Fix an $i$-colored copy $F$ of $\mathcal{G}_i$ (for $i\in J_t$). The copy $F$ occupies exactly $h_i$ vertices. To form a $t$-vertex set $X$ containing $F$, we choose $t-h_i$ additional vertices from the remaining $n-h_i$ vertices, giving $\binom{n-h_i}{t-h_i}$ choices for $X$. There are $N_i(\chi;\mathcal{G}_i)$ such copies of $\mathcal{G}_i$ under coloring $\chi$. The left-hand side rewrites as
$\sum_{i\in J_t}\lambda_i N_i(\chi;\mathcal{G}_i)\binom{n-h_i}{t-h_i}$.
Combining the two expressions yields
\[\sum_{i\in J_t}\lambda_i N_i(\chi;\mathcal{G}_i)\binom{n-h_i}{t-h_i} \ge \oM_{\blambda}(t;\cG_1,\ldots,\cG_k)\binom{n}{t},\]
which proves \eqref{eq:weighted-master}.

For $0\le h\le t$, we have 
\[
\frac{\binom{n}{t}}{\binom{n-h}{t-h}}
=
\frac{\binom{n}{h}}{\binom{t}{h}},
\]
so division of \eqref{eq:weighted-master} by $\binom{n}{t}$ gives \eqref{eq:weighted-normalized-master}. Set
\[
r_i:=\frac{\binom{n}{h_i}}{\binom{t}{h_i}}
\qquad (i\in\Jt).
\]
For $0\le h<t$,
\[
\frac{\binom{n}{h+1}/\binom{t}{h+1}}
     {\binom{n}{h}/\binom{t}{h}}
=
\frac{n-h}{t-h}\ge1.
\]
Hence $r_i\ge r_*:=\binom{n}{\hmin}/\binom{t}{\hmin}$.  By  \eqref{eq:weighted-normalized-master}, we have 
\[
\oM_{\blambda}(t;\cG_1,\ldots,\cG_k)
\le \sum_{i\in\Jt}\frac{\lambda_iN_i(\chi;\mathcal{G}_i)}{r_i}
\le \frac{1}{r_*}\sum_{i=1}^{k}\lambda_iN_i(\chi;\mathcal{G}_i),
\]
is true for arbitrary edge-coloring $\chi$.
Minimizing the final weighted count over $\chi$, we have 
\[
\oM_{\blambda}(t;\cG_1,\ldots,\cG_k)
\le \frac{1}{r_*}\min_{\chi:E(\cK_n)\to[k]}\sum_{i=1}^{k}\lambda_iN_i(\chi;\mathcal{G}_i)= \frac{1}{r_*}\oM_{\blambda}(n;\cG_1,\ldots,\cG_k),
\]
which proves \eqref{eq:weighted-total-amplification}. Finally, one of the $|\Jt|$ summands in \eqref{eq:weighted-normalized-master} is at least their average, which is \eqref{eq:weighted-color-alternative}.
\end{proof}

By Theorem \ref{thm:weighted-amplification}, we obtain the following two corollaries, which are useful in our later proofs.
\begin{corollary}\label{cor:threshold-amplification}
For $R\le t\le n$,
\begin{equation}\label{eq:unweighted-amplification}
\oM(n;\cG_1,\ldots,\cG_k)
\ge
\left\lceil
\oM(t;\cG_1,\ldots,\cG_k)
\frac{\binom{n}{\hmin}}{\binom{t}{\hmin}}
\right\rceil.
\end{equation}
In particular, for every $n\ge R$,
\begin{equation}\label{eq:threshold-total}
\oM(n;\cG_1,\ldots,\cG_k)
\ge
\left\lceil
\oRM(\cG_1,\ldots,\cG_k)
\frac{\binom{n}{\hmin}}{\binom{R}{\hmin}}
\right\rceil.
\end{equation}

\end{corollary}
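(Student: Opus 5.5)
The plan is to specialize Theorem~\ref{thm:weighted-amplification} to the unit weight vector $\blambda=\boldsymbol{1}$ and then use integrality. With all $\lambda_i=1$, inequality \eqref{eq:weighted-total-amplification} reads
\[
\oM(n;\cG_1,\ldots,\cG_k)\ge \oM(t;\cG_1,\ldots,\cG_k)\frac{\binom{n}{\hmin}}{\binom{t}{\hmin}}
\]
for all $R\le t\le n$. This holds because $\oM_{\boldsymbol 1}=\oM$ by Definition~\ref{def:weighted-multiplicity}.

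The second step adds the ceiling. For unit weights, $\oM(n;\cG_1,\ldots,\cG_k)=\min_\chi\sum_i N_i(\chi;\cG_i)$ is a minimum of sums of nonnegative integers, since each $N_i$ counts copies. So the left-hand side is an integer. An integer that is at least a real number $x$ is at least $\lceil x\rceil$, and this gives \eqref{eq:unweighted-amplification}. This is the only point where the restriction to $\blambda=\boldsymbol 1$ is really needed; for general weights the left side need not be an integer.

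The third step is \eqref{eq:threshold-total}. Take $t=R$, which is allowed since $R\le R\le n$. By Definition~\ref{def:weighted-multiplicity}, $\oM(R;\cG_1,\ldots,\cG_k)=\oRM(\cG_1,\ldots,\cG_k)$, so substituting into \eqref{eq:unweighted-amplification} finishes the proof.

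I do not expect any real obstacle. The only things to check are that $\binom{t}{\hmin}>0$, so the ratio is well defined, and this holds because $t\ge R\ge \hmin$; and that the integrality argument is stated explicitly.
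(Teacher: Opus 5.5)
Your proposal is correct and matches the paper's proof: specialize \eqref{eq:weighted-total-amplification} to $\blambda=\boldsymbol{1}$, then take $t=R$. You also state explicitly that $\oM(n;\cG_1,\ldots,\cG_k)$ is an integer, which is what justifies the ceiling; the paper leaves this step implicit.
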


\begin{proof}
By taking $\blambda=1$ in Theorem~\ref{thm:weighted-amplification} equation \eqref{eq:weighted-total-amplification}, we have equation \eqref{eq:unweighted-amplification}.
Setting $t=R$ in \eqref{eq:unweighted-amplification} gives \eqref{eq:threshold-total}.
\end{proof}

\begin{corollary}\label{cor:weighted-density}
For $n\ge R$, the sequence
\[
\frac{\oM_{\blambda}(n;\cG_1,\ldots,\cG_k)}{\binom{n}{\hmin}}
\]
is not decreasing and bounded above by
\(\lambdamin:=\min_{j\in\Imin}\lambda_j\). Therefore, the limit
\begin{equation}\label{eq:weighted-density-definition}
\pi_{\blambda}(\cG_1,\ldots,\cG_k)
:=\lim_{n\to\infty}
\frac{\oM_{\blambda}(n;\cG_1,\ldots,\cG_k)}{\binom{n}{\hmin}}
\end{equation}
exists and satisfies
\begin{equation}\label{eq:weighted-density-bounds}
0<
\frac{\oRM_{\blambda}(\cG_1,\ldots,\cG_k)}{\binom{R}{\hmin}}
\le
\pi_{\blambda}(\cG_1,\ldots,\cG_k)
\le\lambdamin.
\end{equation}
In particular,
\begin{equation}\label{eq:weighted-theta}
\oM_{\blambda}(n;\cG_1,\ldots,\cG_k)=\Theta(n^{\hmin}).
\end{equation}
\end{corollary}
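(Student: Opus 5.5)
The plan has four parts: monotonicity, the upper bound, existence of the limit with its lower bound, and the $\Theta(n^{\hmin})$ conclusion. Write $a_n:=\oM_{\blambda}(n;\cG_1,\ldots,\cG_k)/\binom{n}{\hmin}$ for $n\ge R$.

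\emph{Monotonicity.} I would apply \eqref{eq:weighted-total-amplification} of Theorem~\ref{thm:weighted-amplification} with the pair $(t,n)$ replaced by $(n,n+1)$. This is allowed because $R\le n\le n+1$. Dividing both sides by $\binom{n+1}{\hmin}$ gives $a_{n+1}\ge a_n$ directly. More generally, taking any $R\le t\le n$ gives $a_n\ge a_t$, so the sequence is non-decreasing.

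\emph{Upper bound.} I will exhibit one explicit coloring of $\cK_n$. Fix $j\in\Imin$ with $\lambda_j=\lambdamin$, and color every edge of $\cK_n$ with color $j$. Every other color class is then empty. Since each $\cG_i$ has at least one edge, $N_i=0$ for all $i\ne j$. Also $N_j\le\binom{n}{h_j}=\binom{n}{\hmin}$, because an $h_j$-subset supports at most one order-preserving copy. Hence $\oM_{\blambda}(n)\le\lambdamin\binom{n}{\hmin}$, that is, $a_n\le\lambdamin$.

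\emph{Existence of the limit and the lower bound.} A bounded non-decreasing sequence converges, so $\pi_{\blambda}$ exists and $\pi_{\blambda}\le\lambdamin$. Monotonicity gives $\pi_{\blambda}\ge a_R=\oRM_{\blambda}/\binom{R}{\hmin}$. To see that this is strictly positive, use the definition of $R=\oR(\cG_1,\ldots,\cG_k)$: every coloring of $\cK_R$ contains some correctly colored copy of some $\cG_i$. That copy contributes at least $\min_i\lambda_i>0$ to the weighted count. So $\oRM_{\blambda}>0$, and $\binom{R}{\hmin}>0$ since $R\ge\hmin$.

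\emph{The $\Theta$ statement.} From $a_R\le a_n\le\lambdamin$ and $\binom{n}{\hmin}=\Theta(n^{\hmin})$ for fixed $\hmin$, it follows that $\oM_{\blambda}(n)=\Theta(n^{\hmin})$.

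There is no real obstacle here. The one point needing care is the upper bound: the monochromatic coloring must use a color of minimum order $\hmin$, which is what prevents the count from growing faster than $n^{\hmin}$. It also relies on every target graph having at least one edge, so that all the other colors contribute nothing.
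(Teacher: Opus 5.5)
Your proposal is correct and follows essentially the same route as the paper: the amplification inequality \eqref{eq:weighted-total-amplification} gives monotonicity, the monochromatic coloring in a minimum-order color of least weight gives the upper bound $\lambdamin$, and $t=R$ gives the lower bound. The one addition is your explicit argument that $\oRM_{\blambda}\ge\min_i\lambda_i>0$ from the definition of $R$, which supplies the strict positivity that the paper only calls clear.
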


\begin{proof}
Apply \eqref{eq:weighted-total-amplification} with any $R\le t\le n$ and divide its both sides by $\binom{n}{\hmin}$. This proves monotonicity. Since
$I_* = \{i\in [k] : h_i = h_*\}$ and $\lambda_* = \min_{j\in I_*}\lambda_j$, there exists some index $j\in I_*$ satisfying $\lambda_j = \lambda_*$. Since $j\in I_*$, we have $h_j = h_*$. Define $\chi_0(e) = j$ for every edge $e\in E(K_n)$. This is a valid $k$-edge-coloring of $K_n$. For any  index $i\in [k]$ with $i\neq j$, every edge receives color $j$, so there cannot exist any copy of $\mathcal{G}_i$ whose edges are all color $i$. Therefore, $N_i(\chi_0;\mathcal{G}_i) = 0$ for all $i\neq j$. Only the $i=j$ term contributes to the weighted sum
\[\sum_{i=1}^k \lambda_i N_i(\chi_0;\mathcal{G}_i) = \lambda_j N_j(\chi_0;\mathcal{G}_j).\]
Note that $\mathcal{G}_j$ is an ordered graph on $h_j = h_*$ vertices. For any $h_*$-vertex subset $S\subseteq V(K_n)$, the induced subgraph $K[S]$ is a complete ordered graph on $h_*$ vertices, which contains exactly one order-preserving isomorphic copy of $\mathcal{G}_j$.
The total number of such $h_*$-vertex subsets is $\binom{n}{h_*}$, so $N_j(\chi_0;\mathcal{G}_j) = \binom{n}{h_*}$. Substitute $N_j(\chi_0;\mathcal{G}_j) = \binom{n}{h_*}$ and $\lambda_j = \lambda_*$. We have
$$\sum_{i=1}^k \lambda_i N_i(\chi_0;\mathcal{G}_i) = \lambda_* \binom{n}{h_*}.$$
The minimum over all colorings cannot exceed the value attained by the specific coloring $\chi_0$, hence
\[\oM_{\blambda}(n;\cG_1,\ldots,\cG_k) \le \sum_{i=1}^k \lambda_i N_i(\chi_0;\mathcal{G}_i) = \lambda_* \binom{n}{h_*}.\]
Divide both sides by $\binom{n}{h_*}>0$,
we have $$\frac{\overline{\mathrm{M}}_\lambda(n)}{\binom{n}{h_*}} \le \lambda_*.$$ The lower bound of \eqref{eq:weighted-density-bounds} is clear. Taking $t=R$ in Theorem~\ref{thm:weighted-amplification}, we have 
\begin{equation*}
\frac{\oM_{\blambda}(n;\cG_1,\ldots,\cG_k)}{\binom{n}{\hmin}}
\ge \frac{\oRM_{\blambda}(\cG_1,\ldots,\cG_k)}{\binom{R}{\hmin}}.
\end{equation*}
Since for $n\ge R$, the sequence
\[\frac{\oM_{\blambda}(n;\cG_1,\ldots,\cG_k)}{\binom{n}{\hmin}}\] is not decreasing and is bounded by \(\lambdamin\), we have the upper bound of \eqref{eq:weighted-density-bounds}.
Since
\[\dbinom{n}{h_*} \sim \dfrac{n^{h_*}}{h_*!},\]
it follows from \eqref{eq:weighted-density-bounds} that there exist constants $c_1,c_2>0$ such that
$c_1 n^{h_*} \le \oM_{\blambda}(n;\cG_1,\ldots,\cG_k) \le c_2 n^{h_*}$.
Hence, 
$\oM_{\blambda}(n;\cG_1,\ldots,\cG_k) = \Theta(n^{h_*})$, which implies \eqref{eq:weighted-theta}.
\end{proof}
We also study random-coloring of ordered Ramsey multiplicity, and obtain the following theorem. 
\begin{theorem}\label{thm:weighted-random}
For every $n$,
\begin{equation}\label{eq:weighted-random-upper}
\oM_{\blambda}(n;\cG_1,\ldots,\cG_k)
\le
\min_{\substack{p_i\ge0\\\sum_i p_i= 1}}
\sum_{i=1}^{k}\lambda_i p_i^{e_i}\binom{n}{h_i}.
\end{equation}
Thus,
\begin{equation}\label{eq:weighted-density-random}
\pi_{\blambda}(\cG_1,\ldots,\cG_k)
\le
\min_{\substack{p_i\ge0\ (i\in\Imin)\\\sum_{i\in\Imin}p_i=1}}
\sum_{i\in\Imin}\lambda_i p_i^{e_i}.
\end{equation}
In particular, if \(e_i=e>1\) for every \(i\in I_*\), then
\begin{equation}\label{eq:weighted-common-edge-upper}
\pi_{\blambda}(\cG_1,\ldots,\cG_k)
\le
\left(
\sum_{i\in\Imin}\lambda_i^{-1/(e-1)}
\right)^{1-e},
\end{equation}
and for $e=1$, the right-hand side of \eqref{eq:weighted-density-random} equals $\min_{i\in\Imin}\lambda_i$.
\end{theorem}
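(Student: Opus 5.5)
The plan is to use the random-coloring (first-moment) method for \eqref{eq:weighted-random-upper}, then pass to the limit, then solve a small convex optimization.

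\textbf{The random-coloring bound.} Fix a probability vector $(p_1,\ldots,p_k)$ and color every edge of $\cK_n$ independently, giving color $i$ with probability $p_i$. An $h_i$-subset $S\subseteq[n]$ supports exactly one order-preserving copy of $\cG_i$, as remarked at the start of Section~\ref{sec:general}. That copy is correctly colored with probability $p_i^{e_i}$, since its $e_i$ edges are distinct and independently colored. By linearity of expectation,
\[
\mathbb{E}\sum_i\lambda_iN_i=\sum_i\lambda_ip_i^{e_i}\binom{n}{h_i}.
\]
Some coloring attains at most the expectation, and the minimum in the definition of $\oM_{\blambda}$ is at most that coloring's value. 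Minimizing over the compact simplex, on which the right side is continuous so the minimum is attained, gives \eqref{eq:weighted-random-upper}.

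\textbf{The density bound \eqref{eq:weighted-density-random}.} Take $(p_i)$ supported on $\Imin$, so that $p_i=0$ for $i\notin\Imin$. Every $e_i\ge1$ because each $\cG_i$ has an edge, so the terms with $i\notin\Imin$ vanish. Dividing by $\binom{n}{\hmin}$ and letting $n\to\infty$, which is legitimate since the limit $\pi_{\blambda}$ exists by Corollary~\ref{cor:weighted-density}, gives
\[
\pi_{\blambda}\le\sum_{i\in\Imin}\lambda_ip_i^{e_i}
\]
for every such $(p_i)$. Taking the infimum yields \eqref{eq:weighted-density-random}. No limiting argument is needed for the off-$\Imin$ terms once they are zeroed out.

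\textbf{The closed form when $e_i=e$ on $\Imin$.} For $e>1$ the function $\sum\lambda_ip_i^e$ is strictly convex on the simplex. I would use Lagrange multipliers: $e\lambda_ip_i^{e-1}=\mu$, so $p_i\propto\lambda_i^{-1/(e-1)}$, and every $p_i$ is positive, so the point is interior. Set $S=\sum_{j\in\Imin}\lambda_j^{-1/(e-1)}$ and $p_i=\lambda_i^{-1/(e-1)}/S$. Then
\[
\lambda_ip_i^e=\lambda_i^{1-e/(e-1)}S^{-e}=\lambda_i^{-1/(e-1)}S^{-e},
\]
and summing gives $S^{1-e}$. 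Alternatively, Hölder's inequality with exponents $e$ and $e/(e-1)$ certifies this as the minimum directly. Since \eqref{eq:weighted-common-edge-upper} is only an upper bound, exhibiting this specific $p$ already suffices. For $e=1$ the objective $\sum\lambda_ip_i$ is linear, so its minimum over the simplex sits at a vertex and equals $\min_{i\in\Imin}\lambda_i$.

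The only potentially delicate point is attaining the minimum and passing to the limit, and both are handled as above. The rest is routine.
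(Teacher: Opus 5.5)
Your proposal is correct and follows the paper's proof essentially step for step. Both use the first-moment bound from the independent random coloring, zero out the indices outside $\Imin$ before dividing by $\binom{n}{\hmin}$, take the explicit choice $p_i=\lambda_i^{-1/(e-1)}/S$ (with Hölder as the optimality certificate) to get $S^{1-e}$, and use the vertex argument for $e=1$. Your remarks on compactness of the simplex and on $e_i\ge1$ making the off-$\Imin$ terms vanish are small clarifications that the paper leaves implicit.
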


\begin{proof}
Fix a probability vector $(p_1,\ldots,p_k)$ and color each edge independently, using color $i$ with probability $p_i$. Every $h_i$-element subset supports one candidate order-preserving copy of $\cG_i$, and its required $e_i$ edges all receive color $i$ with probability $p_i^{e_i}$. Linearity of expectation gives
\[
\mathbb E\!\left[\sum_i\lambda_iN_i\right]
=\sum_i\lambda_i p_i^{e_i}\binom{n}{h_i}.
\]
Since the expectation is the average value of the weighted multiplicity over all colorings, there exists at least one coloring $\chi_0$ satisfying
\[\sum_{i=1}^k \lambda_i N_i(\chi_0;\mathcal{G}_i) \le \mathbb{E}\left[\sum_{i=1}^k \lambda_i N_i\right].\]
By definition, $\oM_{\blambda}(n;\cG_1,\ldots,\cG_k) = \min_{\chi}\sum_{i=1}^k\lambda_i N_i(\chi;\mathcal{G}_i)$, so
$\oM_{\blambda}(n;\cG_1,\ldots,\cG_k) \le \sum_{i=1}^k \lambda_i p_i^{e_i} \binom{n}{h_i}$.
The inequality is valid for all nonnegative $p_i$, thus \eqref{eq:weighted-random-upper} follows.

For \eqref{eq:weighted-density-random}, if $i\in \Imin$, then let $p_i$ be positive probability, otherwise let $p_i=0$. 
Divide by $\binom{n}{\hmin}$ in \eqref{eq:weighted-random-upper}, and let $n\to\infty$, we can obtain \eqref{eq:weighted-density-random}.

Assume now that all relevant edge counts equal $e>1$. By H\"older's inequality, we have 
\begin{align*}
1=\sum_{i\in\Imin}p_i
&=\sum_{i\in\Imin}
(\lambda_i^{1/e}p_i)\lambda_i^{-1/e}\\
&\le
\left(\sum_{i\in\Imin}\lambda_i p_i^e\right)^{1/e}
\left(\sum_{i\in\Imin}\lambda_i^{-1/(e-1)}\right)^{(e-1)/e}.
\end{align*}
Let
\[
p_i=
\frac{\lambda_i^{-1/(e-1)}}
{\sum_{j\in\Imin}\lambda_j^{-1/(e-1)}},
\]
and
$S = \sum_{j\in I_*}\lambda_j^{-1/(e-1)}$. Then
$p_i= \frac{\lambda_i^{-1/(e-1)}}{S}$.
First, verify the normalization constraint:
\[\sum_{i\in I_*}p_i^* = \frac{1}{S}\sum_{i\in I_*}\lambda_i^{-1/(e-1)} = \frac{S}{S}=1.\]
Since
\[\begin{aligned}
\sum_{i\in I_*}\lambda_i p_i^e &= \sum_{i\in I_*}\lambda_i \left(\frac{\lambda_i^{-1/(e-1)}}{S}\right)^e
= \frac{1}{S^e}\sum_{i\in I_*}\lambda_i \cdot \lambda_i^{-e/(e-1)} \\
&= \frac{1}{S^e}\sum_{i\in I_*}\lambda_i^{1-\frac{e}{e-1}}
= \frac{1}{S^e}\sum_{i\in I_*}\lambda_i^{-1/(e-1)} \\
&= \frac{S}{S^e} = S^{1-e},
\end{aligned}\]
by \eqref{eq:weighted-density-random}, we have
\[\pi_{\blambda}(\cG_1,\ldots,\cG_k)
\le
\min_{\substack{p_i\ge0\ (i\in\Imin)\\\sum_{i\in\Imin}p_i=1}}
\sum_{i\in\Imin}\lambda_i p_i^{e_i}\leq \sum_{i\in I_*}\lambda_i p_i^e= S^{1-e}.\]

If $e = 1$, then the objective in \eqref{eq:weighted-density-random} becomes the linear function $\sum_{i\in I_*} \lambda_i p_i$. Let
\(\lambda_{\min} := \min_{j\in I_*} \lambda_j~\text{and}~L := \{i\in I_* : \lambda_i = \lambda_{\min}\}\).
Since $\lambda_i \ge \lambda_{\min}$ for every $i\in I_*$, we have
\[
\sum_{i\in I_*} \lambda_i p_i \ge \lambda_{\min} \sum_{i\in I_*} p_i = \lambda_{\min}.
\]
The equality holds whenever the probability vector $p$ is supported on $L$. Equivalently, the minimum is attained by
taking 
\(p_{i}=1\) for an index \(i\in L\) and \(p_j=0\) for all index \(j\in I_*-i\). Hence, the minimum value is $\min_{i\in I_*} \lambda_i$.
\end{proof}

\section{Ordered Ramsey multiplicity of stars}\label{sec:stars}

For integers $r,s\ge1$, let $\cS_{r,s}$ be the ordered star whose center
has $r-1$ leaves to its left and $s-1$ leaves to its right. Let
\(r_1,s_1,r_2,s_2\ge2\), and
\(\hmin=\min\{r_1+s_1-1,r_2+s_2-1\}, e_i=r_i+s_i-2\), and $R=\oR(\cS_{r_1,s_1},\cS_{r_2,s_2})$. Fix positive weights $\blambda=(\lambda_1,\lambda_2)$.
\begin{theorem}\label{thm:star-multiplicity}
For every $R\le t\le n$,
\begin{equation}\label{eq:weighted-star-lower}
\oM_{\blambda}(n;\cS_{r_1,s_1},\cS_{r_2,s_2})
\ge
\oM_{\blambda}(t;\cS_{r_1,s_1},\cS_{r_2,s_2})
\frac{\binom{n}{\hmin}}{\binom{t}{\hmin}}.
\end{equation}
Moreover,
\begin{equation}\label{eq:weighted-star-random-upper}
\oM_{\blambda}(n;\cS_{r_1,s_1},\cS_{r_2,s_2})
\le
\min_{0\le p\le1}
\left[
\lambda_1p^{e_1}\binom{n}{r_1+s_1-1}
+
\lambda_2(1-p)^{e_2}\binom{n}{r_2+s_2-1}
\right].
\end{equation}
If \(t\ge \max\{r_1+s_1-1,r_2+s_2-1\}\), then every red--blue coloring contains, for some
\(i\in\{1,2\}\), at least
\[
\frac{\oM_{\blambda}(t;\cS_{r_1,s_1},\cS_{r_2,s_2})}
{2\lambda_i}
\frac{\binom{n}{r_i+s_i-1}}{\binom{t}{r_i+s_i-1}}
\]
copies of \(\cS_{r_i,s_i}\) in color \(i\).
\end{theorem}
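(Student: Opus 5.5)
The theorem is essentially the specialization of Theorem~\ref{thm:weighted-amplification} and Theorem~\ref{thm:weighted-random} to $k=2$, $\cG_1=\cS_{r_1,s_1}$, $\cG_2=\cS_{r_2,s_2}$. Here $h_i=v(\cS_{r_i,s_i})=r_i+s_i-1$ and $e_i=e(\cS_{r_i,s_i})=r_i+s_i-2$. Since $r_i,s_i\ge2$, each star has at least one edge (in fact at least two), so the standing hypotheses of Section~\ref{sec:general} are met and $\hmin=\min\{h_1,h_2\}$ agrees with the notation there.

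For \eqref{eq:weighted-star-lower}, I will apply \eqref{eq:weighted-total-amplification} with $R=\oR(\cS_{r_1,s_1},\cS_{r_2,s_2})\le t\le n$. Nothing further is needed.

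For \eqref{eq:weighted-star-random-upper}, I will invoke \eqref{eq:weighted-random-upper} with $k=2$. Probability vectors $(p_1,p_2)$ with $p_1+p_2=1$ are parametrized as $(p,1-p)$ with $0\le p\le 1$. Substituting $\binom{n}{h_i}=\binom{n}{r_i+s_i-1}$ gives exactly the stated bracket. The minimum exists because the bracket is continuous in $p$ on $[0,1]$, and \eqref{eq:weighted-random-upper} holds for each individual $p$, so it holds for the minimum.

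For the final alternative, I will use \eqref{eq:weighted-color-alternative}. The hypothesis $t\ge\max\{h_1,h_2\}$, together with $R\le t\le n$, which is implicit since $\oM_{\blambda}(t;\dots)$ appears, guarantees $J_t=\{1,2\}$, so $|J_t|=2$. Thus some $i\in\{1,2\}$ satisfies
\[
N_i(\chi;\cS_{r_i,s_i})\ge \frac{\oM_{\blambda}(t;\cS_{r_1,s_1},\cS_{r_2,s_2})}{2\lambda_i}\frac{\binom{n}{r_i+s_i-1}}{\binom{t}{r_i+s_i-1}}.
\]
I expect no step to be a genuine obstacle. The only point needing care is to note that $t\ge R$ is required, so that Theorem~\ref{thm:weighted-amplification} applies. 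I would state explicitly that $R\ge\max\{h_1,h_2\}$ need not hold in general, which is why the extra hypothesis $t\ge\max h_i$ is imposed to force $|J_t|=2$.
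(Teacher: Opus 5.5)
Your proposal is correct and follows the paper's proof: the first inequality is Theorem~\ref{thm:weighted-amplification} with $k=2$, the second is the random-coloring bound (the paper reruns the independent red/blue coloring argument rather than citing Theorem~\ref{thm:weighted-random}, which amounts to the same thing), and the third is \eqref{eq:weighted-color-alternative} with $|\Jt|=2$. One correction to your closing remark: $R\ge\max\{h_1,h_2\}$ does always hold, because coloring every edge of $\cK_{h_i-1}$ with color $i$ gives no copy of $\cS_{r_i,s_i}$ (too few vertices) and no copy of the other star in the other color (that star has an edge, but the other color is unused), so for $t\ge R$ the hypothesis $t\ge\max\{h_1,h_2\}$ is automatic and $\Jt=\{1,2\}$ always.
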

\begin{proof}
Equation \eqref{eq:weighted-star-lower} is Theorem~\ref{thm:weighted-amplification} with $k=2$, $\cG_1=\cS_{r_1,s_1}$ and $\cG_2=\cS_{r_2,s_2}$. 
Fix any \(p\in[0,1]\), and color each edge of \(\cK_n\) independently red with probability $p$ and blue with probability \(1-p\). Let \(\mathbf{N}_1\) count the red ordered copies of \(\cS_{r_1,s_1}\) and \(\mathbf{N}_2\) count the blue ordered copies of \(\cS_{r_2,s_2}\) in this random coloring. A fixed candidate copy of \(\cS_{r_1,s_1}\) has \(e_1\) edges and is monochromatic red with probability \(p^{e_1}\); there are \(\binom{n}{r_1+s_1-1}\) such vertex subsets, so linearity of expectation gives \(\mathbb{E}[\mathbf{N}_1] = p^{e_1}\binom{n}{r_1+s_1-1}\). Similarly, a fixed candidate \(\cS_{r_2,s_2}\) has \(e_2\) edges and is all blue with probability \((1-p)^{e_2}\), so \(\mathbb{E}[\mathbf{N}_2] = (1-p)^{e_2}\binom{n}{r_2+s_2-1}\). The expected weighted total monochromatic copy count satisfies
\[\mathbb{E}\big[\lambda_1 \mathbf{N}_1 + \lambda_2 \mathbf{N}_2\big] = \lambda_1 p^{e_1}\binom{n}{r_1+s_1-1} + \lambda_2 (1-p)^{e_2}\binom{n}{r_2+s_2-1}.\]
Since the expectation is the average weighted count over all colorings, there exists at least one deterministic red-blue coloring whose weighted monochromatic count is no larger than this expectation. By definition, \(\overline{\mathrm{M}}_\lambda(n;\cS_{r_1,s_1},\cS_{r_2,s_2})\) is the minimal weighted count across all colorings, so
\[\overline{\mathrm{M}}_\lambda(n;\cS_{r_1,s_1},\cS_{r_2,s_2}) \le \lambda_1 p^{e_1}\binom{n}{r_1+s_1-1} + \lambda_2 (1-p)^{e_2}\binom{n}{r_2+s_2-1}.\]
As this bound holds for every \(p\in[0,1]\), we take the minimum over $p$ on the right-hand side to obtain \eqref{eq:weighted-star-random-upper}.

Finally, when $t\ge\max\{r_1+s_1-1,r_2+s_2-1\}$, the visible-index set is $\{1,2\}$. By \eqref{eq:weighted-color-alternative}, we have every red--blue coloring contains either at least
\[\frac{\oM_{\blambda}(t;\cS_{r_1,s_1},\cS_{r_2,s_2})}{2\lambda_1}
\frac{\binom{n}{r_1+s_1-1}}{\binom{t}{r_1+s_1-1}}\] red copies of $\cS_{r_1,s_1}$, or at least
\[\frac{\oM_{\blambda}(t;\cS_{r_1,s_1},\cS_{r_2,s_2})}{2\lambda_2}
\frac{\binom{n}{r_2+s_2-1}}{\binom{t}{r_2+s_2-1}} \] blue copies of $\cS_{r_2,s_2}$.
\end{proof}

For $h\ge2$, define
\begin{equation}\label{eq:balanced-block-count}
B_h(n):=
\binom{\lfloor n/2\rfloor}{h}
+
\binom{\lceil n/2\rceil}{h}.
\end{equation}

\begin{lemma}\label{lem:balanced-split}
For integers $h\ge2$ and $0\le x\le n$,
\[
B_h(n)\leq \binom{x}{h}+\binom{n-x}{h}.
\]
\end{lemma}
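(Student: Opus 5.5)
We prove the inequality by a discrete convexity argument in the split point $x$. Define $f(x):=\binom{x}{h}+\binom{n-x}{h}$ for integers $0\le x\le n$. By definition, $B_h(n)=f(\lfloor n/2\rfloor)=f(\lceil n/2\rceil)$. Also $f(x)=f(n-x)$, so it suffices to treat $x\le\lfloor n/2\rfloor$ and show that $f$ is non-increasing on $\{0,1,\ldots,\lfloor n/2\rfloor\}$.

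The key step is the forward difference. Pascal's rule gives $\binom{x+1}{h}-\binom{x}{h}=\binom{x}{h-1}$ and $\binom{n-x-1}{h}-\binom{n-x}{h}=-\binom{n-x-1}{h-1}$. Hence
\[
f(x+1)-f(x)=\binom{x}{h-1}-\binom{n-x-1}{h-1}.
\]
With the paper's convention $\binom{y}{q}=0$ for $q>y$, the map $y\mapsto\binom{y}{h-1}$ is non-decreasing on nonnegative integers, since $h-1\ge1$. For $x\le\lfloor n/2\rfloor-1$ we have $x<n-x-1$, because $2x+1\le 2\lfloor n/2\rfloor-1<n$. Therefore $f(x+1)-f(x)\le0$, so $f(x)\ge f(\lfloor n/2\rfloor)=B_h(n)$ for every $0\le x\le\lfloor n/2\rfloor$.

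For $x\ge\lceil n/2\rceil$, set $x'=n-x\le\lfloor n/2\rfloor$. Then $f(x)=f(x')\ge B_h(n)$ by the previous case. When $n$ is odd, every $x$ is covered by one of the two cases. When $n$ is even, both cases contain $x=n/2$, where equality holds. This completes the argument.

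The only delicate point is the monotonicity of $y\mapsto\binom{y}{h-1}$ near $y<h-1$, where the convention gives zero. It holds because the values are $0$ up to $y=h-2$ and then $1,h,\ldots$ are non-decreasing. Apart from this, the argument is a routine convexity calculation with no real obstacle.
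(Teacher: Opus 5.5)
Your proof is correct and follows essentially the same route as the paper: the symmetry $f(x)=f(n-x)$, the Pascal-rule difference $f(x+1)-f(x)=\binom{x}{h-1}-\binom{n-x-1}{h-1}\le 0$ for $x\le\lfloor n/2\rfloor-1$, and the monotonicity of $\binom{y}{h-1}$ in $y$. Your explicit check of that monotonicity under the convention $\binom{y}{q}=0$ for $q>y$ is a small point the paper leaves implicit.
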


\begin{proof}
Define
\[
F(x) = \binom{x}{h} + \binom{n-x}{h}, \qquad 0 \le x \le n.
\]
Clearly, \(F(x) = F(n-x) \). Hence, it suffices to consider the case \( 0 \le x \le \lfloor n/2 \rfloor \).
For any \( 0 \le x \le \lfloor n/2 \rfloor - 1 \), we have \( x \le n - x - 2 \). Since \( \binom{m+1}{k} - \binom{m}{k} = \binom{m}{k-1} \), we compute the difference
\[
\begin{aligned}
F(x+1)-F(x)
&= \binom{x+1}{h} + \binom{n-x-1}{h} - \binom{x}{h} - \binom{n-x}{h} \\[0.2cm]
&= \binom{x}{h-1} - \binom{n-x-1}{h-1}.
\end{aligned}
\]
Since \( x \le n - x - 2 \), it follows that \( x < n - x - 1 \). Since the binomial coefficient \( \binom{m}{h-1} \) is non-decreasing with respect to \(m\), we get
\[
\binom{x}{h-1} \le \binom{n-x-1}{h-1}.
\]
Therefore,
\[
F(x+1) - F(x) \le 0,
\]
which means \( F(x) \) is non-increasing on the interval \( 0 \le x \le \lfloor n/2 \rfloor \). Hence,
\[
F(x) \ge F(\lfloor n/2 \rfloor), \qquad 0 \le x \le \lfloor n/2 \rfloor.
\]

If \( x > \lfloor n/2 \rfloor \), then by symmetry \( F(x) = F(n-x) \), where \( n-x \le \lfloor n/2 \rfloor \). Thus the same inequality holds for all \( 0 \le x \le n \). Hence,
\[
\binom{x}{h} + \binom{n-x}{h} 
\ge 
\binom{\lfloor n/2 \rfloor}{h} + \binom{\lceil n/2 \rceil}{h} 
= B_h(n).
\]
This completes the proof.
\end{proof}

\begin{theorem}\label{thm:balanced-star-upper}
For every pair of two-sided ordered stars,
\begin{equation}\label{eq:weighted-balanced-star-finite}
\oM_{\blambda}(n;\cS_{r_1,s_1},\cS_{r_2,s_2})
\le
\min\{\lambda_1B_{h_1}(n),\lambda_2B_{h_2}(n)\},
\end{equation}
where $h_i=r_i+s_i-1$ for $i\in [2]$.
Moreover, the balanced split minimizes the resulting monochromatic count for the internal color. Therefore,
\begin{equation}\label{eq:weighted-balanced-star-density}
\pi_{\blambda}(\cS_{r_1,s_1},\cS_{r_2,s_2})
\le
\min_{\{i:h_i=\hmin\}}\lambda_i\,2^{1-\hmin}.
\end{equation}
\end{theorem}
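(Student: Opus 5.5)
The plan is to use a two-interval construction. Split $[n]$ into consecutive intervals $A<B$ with $|A|=x$ and $|B|=n-x$. Choose an index $i\in\{1,2\}$ and let $j$ be the other index. Color every edge inside $A$ and every edge inside $B$ with color $i$, and every edge between $A$ and $B$ with color $j$.

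First I count the color-$j$ copies of $\cS_{r_j,s_j}$, and show there are none. The color-$j$ graph is the complete bipartite graph between the intervals $A<B$. Suppose a color-$j$ star has its center in $A$. Then all its leaves must lie in $B$, so they are all to the right of the center. Since $r_j\ge2$, the star needs at least one leaf to the left of the center, a contradiction. Symmetrically, a center in $B$ has all its leaves in $A$, hence to its left, which contradicts $s_j\ge2$. So $N_j=0$.

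Next I count the color-$i$ copies of $\cS_{r_i,s_i}$. The color-$i$ graph is the disjoint union of the complete ordered graphs on $A$ and on $B$. A star is connected, so every color-$i$ copy lies entirely inside $A$ or entirely inside $B$. Conversely, every $h_i$-subset of $A$ or of $B$ supports exactly one copy, by the remark in Section~\ref{sec:general}. Hence
\[
N_i=\binom{x}{h_i}+\binom{n-x}{h_i}.
\]
The weighted count is therefore $\lambda_i\bigl(\binom{x}{h_i}+\binom{n-x}{h_i}\bigr)$. By Lemma~\ref{lem:balanced-split}, this is minimized at $x=\lfloor n/2\rfloor$, where it equals $\lambda_iB_{h_i}(n)$; this is the "balanced split" assertion. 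Taking the better of the two choices $i=1,2$ gives \eqref{eq:weighted-balanced-star-finite}.

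For \eqref{eq:weighted-balanced-star-density}, divide by $\binom{n}{\hmin}$ and let $n\to\infty$. The limit exists by Corollary~\ref{cor:weighted-density}. For fixed $h$ we have $B_h(n)\sim 2\,(n/2)^h/h!$, so
\[
\frac{B_{h_i}(n)}{\binom{n}{\hmin}}\to
\begin{cases} 2^{1-\hmin}, & h_i=\hmin,\\ \infty, & h_i>\hmin.\end{cases}
\]
Thus for each $i$ with $h_i=\hmin$, the bound $\lambda_iB_{h_i}(n)$ from the first part gives $\pi_{\blambda}\le\lambda_i2^{1-\hmin}$. Minimizing over such $i$ yields \eqref{eq:weighted-balanced-star-density}.

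No step is a real obstacle. The only delicate point is the vanishing of $N_j$, which uses the hypothesis $r_j,s_j\ge2$ that both sides of the star are nonempty.
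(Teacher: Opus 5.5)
Your proposal is correct and follows essentially the same argument as the paper. It uses the same two-interval coloring with the cross edges given the other target's color, Lemma~\ref{lem:balanced-split} for the balanced split, and the limit $B_h(n)/\binom{n}{h}\to 2^{1-h}$ for the density bound, and it is slightly more explicit than the paper in noting that connectivity confines internal-color copies to one block and that the hypothesis $r_j,s_j\ge 2$ is what rules out cross-colored copies.
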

\begin{proof}
Fix $j\in\{1,2\}$. Split $[n]$ into consecutive intervals $A<B$ of sizes $x$ and $n-x$. Color all edges inside $A$ and inside $B$ with color $j$, and color every cross-edge with the other color. A two-sided star in the cross color cannot be centered in $A$, since it would need a cross-colored left edge inside $A$; it cannot be centered in $B$, since it would need a cross-colored right edge inside $B$. Hence, the target assigned the cross color has no copy. As every color-$j$ target lies entirely in one block, the weighted count equals
\[
\lambda_j\left[\binom{x}{r_j+s_j-1}+\binom{n-x}{r_j+s_j-1}\right].
\]
Lemma~\ref{lem:balanced-split} shows that this is minimized by the balanced split and equals $\lambda_jB_{r_j+s_j-1}(n)$. Taking the better of $j=1$ and $j=2$ proves \eqref{eq:weighted-balanced-star-finite}.

We now derive the density bound \eqref{eq:weighted-balanced-star-density}. For fixed $h$, evaluate the asymptotic ratio
\[\frac{\binom{\lfloor n/2\rfloor}{h}}{\binom{n}{h}} = \prod_{j=0}^{h-1} \frac{\lfloor n/2\rfloor - j}{n-j}.\]
As $n\to\infty$, each factor converges to $\tfrac12$, so the product converges to $2^{-h}$. Since $B_h(n)=\binom{\lfloor n/2\rfloor}{h}+\binom{n-\lfloor n/2\rfloor}{h}$, we obtain
$$\frac{B_h(n)}{\binom{n}{h}} \to 2^{-h} + 2^{-h} = 2^{1-h}.$$
Take $i\in I_*$ satisfying $r_i+s_i-1=h_*$. Divide inequality \eqref{eq:weighted-balanced-star-finite} by $\binom{n}{h_*}$, we have
\[\pi_\lambda(\mathcal{S}_{r_1,s_1},\mathcal{S}_{r_2,s_2}) = \lim_{n\to\infty}\frac{\overline{\mathrm{M}}_\lambda(n;\mathcal{S}_{r_1,s_1},\mathcal{S}_{r_2,s_2})}{\binom{n}{h_*}} \le \lambda_i \cdot 2^{1-h_*}.\]
Minimizing over all $i\in I_*$ with minimal order $r_i+s_i-1=h_*$ yields \(\pi_\lambda(\mathcal{S}_{r_1,s_1},\mathcal{S}_{r_2,s_2}) \le \min_{\{i:h_i=h_*\}} \lambda_i 2^{1-h_*}\).
\end{proof}

Combining the amplification lower bound with the balanced two-interval
upper bound yields the following estimate for the limiting density.
\begin{corollary}\label{cor:star-density}
In the unweighted case,
\begin{equation}\label{eq:star-density-bounds}
\frac{\oRM(\mathcal{S}_{r_1,s_1},\mathcal{S}_{r_2,s_2})}{\binom{R}{\hmin}}
\le
\pi_{\bf{1}}(\mathcal{S}_{r_1,s_1},\mathcal{S}_{r_2,s_2})
\le
2^{1-\hmin}.
\end{equation}
If $r_1+s_1-1=r_2+s_2-1=h$, then $2^{1-h}$ is half of the uniform random-coloring bound $2^{2-h}$.
\end{corollary}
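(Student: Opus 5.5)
The plan is to read both inequalities in \eqref{eq:star-density-bounds} off results already proved, and then verify the closing remark by a one-line computation with Theorem~\ref{thm:weighted-random}.

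\textbf{Lower bound.} Specialize Corollary~\ref{cor:weighted-density} to $k=2$, $\blambda=\boldsymbol{1}$, $\cG_1=\cS_{r_1,s_1}$ and $\cG_2=\cS_{r_2,s_2}$. The limit $\pi_{\bf 1}$ exists, and \eqref{eq:weighted-density-bounds} gives
\[
\pi_{\bf 1}\ge \frac{\oRM(\cS_{r_1,s_1},\cS_{r_2,s_2})}{\binom{R}{\hmin}}.
\]
Equivalently, one can take $t=R$ in \eqref{eq:weighted-star-lower}, divide by $\binom{n}{\hmin}$, and let $n\to\infty$.

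\textbf{Upper bound.} Apply \eqref{eq:weighted-balanced-star-density} of Theorem~\ref{thm:balanced-star-upper} with $\lambda_1=\lambda_2=1$. The minimum over $\{i:h_i=\hmin\}$ is over a nonempty set of ones, so it equals $1$. This gives $\pi_{\bf 1}\le 2^{1-\hmin}$. The real content here is the two-interval construction already proved in that theorem: the cross colour contains no copy of its star, because a two-sided star needs a left edge and a right edge at its centre, and at least one of these stays inside the centre's own block.

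\textbf{Comparison with random colourings.} Suppose $h_1=h_2=h$. Then $e_1=e_2=h-1=:e$ and $\Imin=\{1,2\}$.
\begin{itemize}
\item If $e>1$, apply \eqref{eq:weighted-common-edge-upper} with unit weights. It gives $(1+1)^{1-e}=2^{2-h}$, attained at $p=1/2$.
\item The degenerate case $e=1$ cannot occur, because $r_i,s_i\ge 2$ forces $h\ge 3$.
\end{itemize}
Since $2^{1-h}=\tfrac12\cdot 2^{2-h}$, the balanced two-interval bound is exactly half the uniform random-colouring bound, as claimed.

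No step is a real obstacle. The only point needing care is the hypothesis $r_i,s_i\ge2$: it guarantees that each target star has both a left and a right leaf. That is exactly what kills the cross colour in the construction, and it is also what makes $e\ge2$, so that the H\"older bound \eqref{eq:weighted-common-edge-upper} applies.
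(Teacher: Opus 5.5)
Your proposal is correct and follows the paper's proof. The lower bound comes from Corollary~\ref{cor:weighted-density} with unit weights, the upper bound from Theorem~\ref{thm:balanced-star-upper}, and the final comparison with the random-colouring value $2^{2-h}$. The only cosmetic difference is that you get $2^{2-h}$ from the H\"older-optimized form \eqref{eq:weighted-common-edge-upper}, correctly noting that $e=h-1\ge 2$, whereas the paper simply plugs $p_1=p_2=\tfrac12$ into \eqref{eq:weighted-density-random}; with equal weights and equal edge counts these give the same number.
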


\begin{proof}
For \eqref{eq:star-density-bounds}, the lower bound comes from Corollary~\ref{cor:weighted-density} equation \eqref{eq:weighted-density-bounds} with unit weights. The upper bound follows from Theorem~\ref{thm:balanced-star-upper}.
When $r_1+s_1-1=r_2+s_2-1=h$, each target has $h-1$ edges, by the random coloring bound Theorem~\ref{thm:weighted-random}, the uniform probability vector $p_1=p_2=\tfrac12$ gives the density upper bound
$\sum_{i=1}^2 p_i^{h-1} = 2\cdot \left(\frac12\right)^{h-1} = 2^{2-h}$.
Comparing the two bounds, we see that $2^{1-h}$ is exactly one half of the uniform random-coloring bound $2^{2-h}$.
\end{proof}



\section{Ordered Ramsey multiplicity of perfect matching}\label{sec:matchings}

\begin{definition}
For $m\ge1$, let $\bB_m$ be the family of all ordered perfect matchings on $[2m]$ containing the edge $\{1,2m\}$. Let $\oR(\bB_m)$ be the least $N$ such that every red--blue coloring of $\cK_N$ contains a monochromatic order-preserving copy of some member of $\bB_m$. Let $\oM_2(n;\bB_m)$ be the minimum total number of monochromatic copies of members of $\bB_m$ in a red--blue coloring of $\cK_n$.
\end{definition}

\begin{lemma}\label{lem:B-family-size}
For $m\ge1$,
\[
|\bB_m|
=
\frac{(2m-2)!}{2^{m-1}(m-1)!}.
\]
\end{lemma}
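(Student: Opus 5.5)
Since the edge $\{1,2m\}$ is fixed, a member of $\bB_m$ corresponds bijectively to a perfect matching on the remaining $2m-2$ vertices $\{2,\ldots,2m-1\}$. The map sends a matching $M\in\bB_m$ to $M\setminus\{\{1,2m\}\}$, and its inverse adds the edge $\{1,2m\}$ back. Both maps are well defined: every perfect matching of $\{2,\ldots,2m-1\}$ together with $\{1,2m\}$ is a perfect matching of $[2m]$ containing that edge, and conversely. So the plan is to reduce the statement to counting perfect matchings on a $2q$-element set with $q=m-1$. Because the vertex order plays no role in which edge sets are perfect matchings, this is an unordered count.

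Next I show that the number of perfect matchings on a $2q$-element set equals
\[
(2q-1)!!=\frac{(2q)!}{2^q q!}.
\]
I would use the standard double-counting argument. List the $2q$ elements in a linear arrangement, in $(2q)!$ ways, and pair consecutive positions $(1,2),(3,4),\ldots$. Each perfect matching arises from exactly $2^q q!$ arrangements, since the $q$ pairs may be permuted and each pair may be internally swapped. Dividing gives the formula. Alternatively, one can induct: the smallest element has $2q-1$ possible partners, which gives $(2q-1)$ times the count for $2q-2$ elements.

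Substituting $q=m-1$ gives $|\bB_m|=\frac{(2m-2)!}{2^{m-1}(m-1)!}$. The case $m=1$ gives $1$, matching the single matching $\{\{1,2\}\}$, since $0!/(2^0\,0!)=1$. There is no real obstacle. The only points needing care are that the bijection respects the definition of the family as subsets of $\binom{[2m]}{2}$, and that the empty matching is counted for $m=1$.
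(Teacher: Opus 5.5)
Your proof is correct and follows the paper's route: you remove the fixed edge $\{1,2m\}$, which reduces the problem to perfect matchings on the remaining $2m-2$ vertices, and then count these by overcounting and dividing out the symmetries. The only difference is cosmetic: the paper overcounts by ordered sequences of pairs, $\binom{2m-2}{2}\binom{2m-4}{2}\cdots\binom{2}{2}$, and divides by $(m-1)!$, whereas you overcount by linear arrangements and divide by $2^{m-1}(m-1)!$. Your explicit bijection and your check of the case $m=1$ are slightly more careful than the paper's one-line reduction.
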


\begin{proof}
The edge $\{1,2m\}$ is fixed. This problem is equivalent to counting the number of ways to partition the $2(m-1)$ vertices into $m-1$ disjoint vertex pairs. Therefore, the total number of such partitions is computed as
\[
\frac{\dbinom{2m-2}{2}\dbinom{2m-4}{2}\cdots\dbinom{2}{2}}{(m-1)!}
=\frac{1}{(m-1)!}\cdot\frac{(2m-2)!}{(2m-4)!2!}\cdot\frac{(2m-4)!}{(2m-6)!2!}\cdots 1
=\frac{(2m-2)!}{2^{m-1} (m-1)!}.
\]
\end{proof}

\begin{theorem}\label{thm:B-family}
For $m\ge1$, let $R_m=\oR(\bB_m)$ with $R_m\le t\le n$. Then
\begin{equation}\label{eq:B-family-amplification}
\oM(n; \bB_m,\bB_m)
\ge
\oM(t;\bB_m,\bB_m)
\frac{\binom{n}{2m}}{\binom{t}{2m}}.
\end{equation}
Moreover,
\begin{equation}\label{eq:B-family-upper}
\oM(n; \bB_m,\bB_m)
\le
\binom{n}{2m}
\frac{(2m-2)!}{2^{2m-2}(m-1)!}.
\end{equation}
Therefore,
\[
\frac{\oM(n; \bB_m,\bB_m)}{\binom{n}{2m}},
\]
$n\ge R_m$, is not decreasing and converges to a density $\pi(\bB_m)$ satisfying
\begin{equation}\label{eq:B-family-density-bounds}
\frac{\oM(R_m; \bB_m,\bB_m)}{\binom{R_m}{2m}}
\le
\pi(\bB_m)
\le
\frac{(2m-2)!}{2^{2m-2}(m-1)!}.
\end{equation}
In particular, $\oM(n; \bB_m,\bB_m)=\Theta(n^{2m})$ for fixed $m$.
\end{theorem}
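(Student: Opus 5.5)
The amplification inequality \eqref{eq:B-family-amplification} follows by rerunning the double-counting argument of Theorem~\ref{thm:weighted-amplification}, now for families instead of single graphs. Fix a red--blue coloring $\chi$ of $\cK_n$. Every copy of a member of $\bB_m$ has vertex set of size exactly $2m$. For each $t$-subset $X\subseteq[n]$ with $t\ge R_m$, the restriction $\chi|_X$ contains at least $\oM(t;\bB_m,\bB_m)$ monochromatic copies. A fixed monochromatic copy, that is, a $2m$-set together with a member of $\bB_m$ placed on it, lies in exactly $\binom{n-2m}{t-2m}$ such sets $X$. Summing over $X$ gives
\[
N(\chi)\binom{n-2m}{t-2m}\ge \oM(t;\bB_m,\bB_m)\binom{n}{t},
\]
where $N(\chi)$ is the total count. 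Using the identity $\binom{n}{t}/\binom{n-2m}{t-2m}=\binom{n}{2m}/\binom{t}{2m}$ and minimizing over $\chi$ yields \eqref{eq:B-family-amplification}. One bookkeeping point needs care: a single $2m$-set may host several members of $\bB_m$, in one or both colors. We therefore count pairs consisting of a vertex set, a family member, and a color. The argument only needs each such pair to restrict consistently to every $X$ containing it, and this is immediate.

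For the upper bound \eqref{eq:B-family-upper}, I would use the uniformly random coloring, as in Theorem~\ref{thm:weighted-random}. Fix a $2m$-set $S$, a member $F\in\bB_m$, and a color. The $m$ edges of $F$ all receive that color with probability $2^{-m}$. There are two colors and, by Lemma~\ref{lem:B-family-size}, $|\bB_m|$ choices of $F$, so the expected total count is
\[
\binom{n}{2m}\cdot 2\cdot 2^{-m}\cdot\frac{(2m-2)!}{2^{m-1}(m-1)!}=\binom{n}{2m}\frac{(2m-2)!}{2^{2m-2}(m-1)!}.
\]
Some coloring achieves at most this expectation, which gives \eqref{eq:B-family-upper}.

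For the final claims, divide \eqref{eq:B-family-amplification} by $\binom{n}{2m}$; this shows the normalized sequence is nondecreasing for $n\ge R_m$. By \eqref{eq:B-family-upper} the sequence is bounded above, so the limit $\pi(\bB_m)$ exists and satisfies the upper bound in \eqref{eq:B-family-density-bounds}. Setting $t=R_m$ gives the lower bound. The lower bound is positive because, by the definition of $R_m$, every coloring of $\cK_{R_m}$ contains at least one monochromatic copy, so $\oM(R_m;\bB_m,\bB_m)\ge1$. These two positive finite bounds give $\Theta(n^{2m})$.

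The main obstacle is the first step. Theorem~\ref{thm:weighted-amplification} is stated for single target graphs, so I would either redo its proof verbatim for families, checking the multiplicity convention above, or check that the count is additive over members and colors so that the same averaging applies.
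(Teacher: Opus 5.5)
Your proposal is correct and follows the paper's proof: the same double-counting amplification, the same uniformly random red--blue coloring combined with Lemma~\ref{lem:B-family-size}, and the same monotone-and-bounded limit argument. The paper simply cites Theorem~\ref{thm:weighted-amplification} with $\cG_1=\cG_2=\bB_m$, so your re-derivation for families, and your explicit observation that $\oM(R_m;\bB_m,\bB_m)\ge1$ (which gives the positive lower density behind the $\Theta(n^{2m})$ claim), make explicit points the paper leaves implicit.
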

\begin{proof}
Equation \eqref{eq:B-family-amplification} can be directly obtained by taking $\blambda=1$ and $\cG_1=\cG_2=\bB_m$ in Equation \eqref{eq:weighted-total-amplification}.

For the upper bound, assign every edge of \(\cK_n\) independently to red or blue, each with probability \(\tfrac12\). On a fixed $2m$-element vertex set there are $|\bB_m|$ candidate matchings, and each is monochromatic with probability $2^{1-m}$. By linearity of expectation and  Lemma~\ref{lem:B-family-size}, the expected number of monochromatic \(\mathcal{B}_m\) copies inside the fixed $2m$-vertex set is
\[
\frac{(2m-2)!}{2^{m-1}(m-1)!}
2^{1-m}
=
\frac{(2m-2)!}{2^{2m-2}(m-1)!}.
\]
There are \(\binom{n}{2m}\) distinct $2m$-vertex subsets of the $n$ vertices of \(\cK_n\). Again apply linearity of expectation across all subsets. The global expected total number of monochromatic \(\mathcal{B}_m\) copies in the random $2$-coloring of \(\cK_n\) is \[\binom{n}{2m}
\frac{(2m-2)!}{2^{2m-2}(m-1)!}.\]
An integer-valued count no larger than the expectation exists, thus \eqref{eq:B-family-upper} holds.

Dividing \eqref{eq:B-family-amplification} by \(\binom{n}{2m}\) yields \[\frac{\oM(n;\bB_m,\bB_m)}{\binom{n}{2m}}
\ge
\frac{\oM(t;\bB_m,\bB_m)}{\binom{t}{2m}},
\] whenever \(R_m \le t \le n\), so the sequence \(a_n = \overline{\mathrm{M}}(n;\mathcal{B}_m,\mathcal{B}_m)/\binom{n}{2m}\) is not decreasing for \(n\ge R_m\). Taking $t=R_m$ in equation \eqref{eq:B-family-amplification}, since $a_n$ is not decreasing, we have \[\frac{\oM(R_m;\bB_m,\bB_m)}{\binom{R_m}{2m}} \le \pi(\mathcal{B}_m).\] By \eqref{eq:B-family-upper}, we have \[\pi(\mathcal{B}_m)\leq \frac{(2m-2)!}{2^{2m-2}(m-1)!}.\]
Since \(\binom{n}{2m}\sim \frac{n^{2m}}{(2m)!}\), by \eqref{eq:B-family-upper} again, 
there exist constants $c_1,c_2>0$ such that
$$c_1 n^{2m} \le \oM(n;\bB_m,\bB_m) \le c_2 n^{2m}.$$
Therefore, we have $\overline{\mathrm{M}}_2(n;\mathfrak{B}_m) = \Theta(n^{2m})$ for fixed $m$.
\end{proof}

\section{Regularity in ordered colorings}\label{sec:regularity}

Let $\chi:E(K_n)\to[c]$ be a $c$-edge-coloring.  For disjoint nonempty sets $A,B\subseteq[n]$ and color $\gamma\in[c]$, write
\[
d_\gamma(A,B):=\frac{e_\gamma(A,B)}{|A||B|}.
\]
The color densities sum to one.

\begin{definition}\label{def:colored-regular-pair}
A pair $(A,B)$ is \emph{$\epsilon$-regular} if, whenever $X\subseteq A$ and $Y\subseteq B$ satisfy $|X|\ge\epsilon|A|$ and $|Y|\ge\epsilon|B|$,
\(|d_\gamma(X,Y)-d_\gamma(A,B)|\le\epsilon~\text{for every }\gamma\in[c]\).
\end{definition}
An \emph{equitable partition with exceptional class} is a partition
\(\mathcal P=(V_0,V_1,\ldots,V_K)\),
in which $|V_1|=\cdots=|V_K|$. We call \(V_1,\ldots,V_K\) the \textit{clusters} of the partition. The partition is $\epsilon$-regular if $|V_0|<\epsilon n$ and all but at most $\epsilon K^2$ unordered pairs of clusters are $\epsilon$-regular.
We first list the following theorem which is useful in our later proofs. 
\begin{theorem}[Szemer{\'e}di \cite{Szemeredi}]\label{thm:multicolor-regularity}
For every $c\ge2$, every integer $m\ge1$, and every $0<\epsilon<1/2$, there are integers
\(M=M(c,m,\epsilon), n_0=n_0(c,m,\epsilon) \)
with the following property.  Let $n\ge n_0$, let $\chi:E(K_n)\to[c]$, and let $\mathcal A$ be an equitable partition of $[n]$ into at most $m$ parts.  Then there is an $\epsilon$-regular equitable partition
\(\mathcal P=(V_0,V_1,\ldots,V_K)\), such that every cluster is contained in a part of $\mathcal A$ and
\(m\le K\le M \).
\end{theorem}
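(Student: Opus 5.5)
We plan to prove the statement by the standard energy-increment argument for Szemerédi's lemma, run for all $c$ colors at once. The refinements are kept nested inside the given partition $\mathcal A$. For an equitable partition $\mathcal P=(V_0,V_1,\ldots,V_K)$ we define the index
\[
q(\mathcal P):=\sum_{\gamma\in[c]}\ \sum_{1\le i<j\le K}\frac{|V_i||V_j|}{n^2}\,d_\gamma(V_i,V_j)^2 .
\]
Since $\sum_\gamma d_\gamma(V_i,V_j)^2\le\bigl(\sum_\gamma d_\gamma(V_i,V_j)\bigr)^2=1$, we get $0\le q(\mathcal P)\le 1/2$. By Cauchy–Schwarz (convexity of $x\mapsto x^2$), refining the clusters never decreases $q$.

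\textbf{Initial partition.} Cut each part of $\mathcal A$ into pieces of a common size $\lfloor n/m'\rfloor$, with $m'\ge \max\{m,2/\epsilon\}$. The leftovers, fewer than $m\cdot m'$ vertices, go into $V_0$. This gives at least $m$ clusters, each lying inside a part of $\mathcal A$, with $|V_0|\le \epsilon n/4$ once $n\ge n_0$.

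\textbf{Refinement step.} Suppose the current partition $\mathcal P$ with $K$ clusters is not $\epsilon$-regular, so more than $\epsilon K^2$ pairs are irregular.
\begin{itemize}
\item For each irregular pair $(V_i,V_j)$, choose a color $\gamma$ and witnesses $X_{ij}\subseteq V_i$, $Y_{ij}\subseteq V_j$ with $|d_\gamma(X_{ij},Y_{ij})-d_\gamma(V_i,V_j)|>\epsilon$.
\item The defect form of Cauchy–Schwarz, applied to the color-$\gamma$ term alone (all other terms do not decrease), shows that splitting $V_i$ by $X_{ij}$ and $V_j$ by $Y_{ij}$ raises that pair's contribution by at least $\epsilon^4|V_i||V_j|/n^2$.
\item Take the common refinement of each $V_i$ by all the $X_{ij}$ and $Y_{ji}$, giving at most $2^{K}$ atoms per cluster. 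Summing over the irregular pairs, and using $|V_i|\ge(1-\epsilon)n/K$, gives $q(\mathcal P')\ge q(\mathcal P)+\epsilon^5/2$.
\item Restore equitability: chop every atom into pieces of size $\lfloor |V_1|/(K4^{K})\rfloor$ and send the remainders to $V_0$. This adds at most $n/2^{K}$ vertices to $V_0$, and the new clusters are still subsets of old clusters, hence of parts of $\mathcal A$.
\end{itemize}

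\textbf{Iteration and bounds.} Since $q\le 1/2$, there are at most $\lceil \epsilon^{-5}\rceil$ steps. The number of clusters grows by a tower-type map $K\mapsto K4^{K}$, which defines $M(c,m,\epsilon)$. Starting with $K\ge 2/\epsilon$ makes the total growth of $V_0$ at most $\sum_s n/2^{K_s}<\epsilon n/2$, so $|V_0|<\epsilon n$ throughout. The number of clusters only increases, so $K\ge m$.

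\textbf{Expected obstacle.} The main difficulty is this bookkeeping: the exceptional class must stay below $\epsilon n$, the refinement must not change the index by more than a negligible amount, and $n_0$ must be large enough that all cluster sizes stay positive. It is handled by choosing $n_0$ at least $M$ times a large constant.

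An alternative route avoids redoing the argument. Apply the ordinary (one-color) regularity lemma with a prescribed initial partition to color $1$ with parameter $\epsilon_1$. Then reapply it to color $2$, taking the resulting partition as the initial one with $\epsilon_2\ll\epsilon_1$, and so on. Regularity of color $\gamma$ is essentially inherited under refinement with a loss controlled by the ratio of parameters. Since this inheritance is messier than the direct multicolor energy argument, we would take the direct route.
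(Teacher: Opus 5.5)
The paper gives no proof of this statement; it quotes the multicolour form of Szemer\'edi's lemma (with a prescribed initial partition) from the literature. Your multicolour energy-increment argument is the standard proof of exactly that result, and its core is sound. The index summed over colours is at most $1/2$, independently of $c$. The defect form of Cauchy--Schwarz gives a gain of $\epsilon^4|V_i||V_j|/n^2$ from the witnessing colour of each irregular pair, and the common refinement keeps these gains by convexity. All new clusters stay inside parts of $\mathcal A$. You are also right to take the direct route. The iteration over colours would not work as sketched: colour-$1$ regularity passes to sub-pairs only when they have relative size at least about $\epsilon_1$, while the later refinements produce tower-small pieces, so the parameters would have to be chosen circularly.

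Several bookkeeping claims are false as written, though, and must be repaired.

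\textbf{Initial partition.} Each part of $\mathcal A$ leaves a remainder smaller than the piece size $\lfloor n/m'\rfloor$, not smaller than $m'$. So the initial $V_0$ can have size of order $mn/m'$, which is linear in $n$, and taking $n_0$ large does not help. With $m'=\max\{m,2/\epsilon\}$ this breaks both $|V_0|\le\epsilon n/4$ and $K\ge m$. For example, take $m=m'=100$ (allowed when $\epsilon\ge 1/50$) and $\mathcal A$ with $67$ parts. Each part then yields a single piece, so $K=67<m$ and $|V_0|\approx n/3$. Taking $m'\ge 4m/\epsilon$ repairs both problems. Then $|V_0|<m\lfloor n/m'\rfloor\le\epsilon n/4$. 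Also, a part of size at least $\lfloor n/p\rfloor$ contains at least $\lfloor m'/p\rfloor$ pieces, so $K\ge p\lfloor m'/p\rfloor>m'-p\ge m$.

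\textbf{Monotonicity of the index.} Chopping atoms and sending remainders to $V_0$ is not a refinement of the clusters, so $q$ can decrease. You need to bound this loss by (number of moved vertices)$/n\le 2^{-K}$. To see this, first keep the remainders as extra clusters, which does not decrease $q$; then delete their terms, each vertex carrying at most $1/n$ because $\sum_\gamma d_\gamma^2\le 1$. You then need $K_0$ large enough that $2^{-K_0}$ is well below the gain, say $2^{-K_0}\le\epsilon^5/8$. The condition $K_0\ge 2/\epsilon$ does not ensure this for $\epsilon$ near $1/2$. Alternatively, count the vertices of $V_0$ as singleton classes in the index, which makes $q$ genuinely monotone.

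\textbf{Gain constant.} The gain you get is $\epsilon^5(1-\epsilon)^2$, which is below $\epsilon^5/2$ when $\epsilon>1-1/\sqrt2$. Use $\epsilon^5/4$ instead, which holds because $Ks=n-|V_0|>n/2$.

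With these adjustments the net gain is at least $\epsilon^5/8$ per step, there are at most $4\epsilon^{-5}$ steps, and your argument proves the statement.
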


We next prove several useful lemmas for our later proofs.
\begin{lemma}\label{lem:typical-degree}
Let $(A,B)$ be $\epsilon$-regular in a fixed color $\gamma$ with density $d_\gamma(A,B)$.  Fewer than $\epsilon|A|$ vertices $x\in A$ satisfy
\(d_\gamma(x,B)<(d_\gamma(A,B)-\epsilon)|B|\),
and fewer than $\epsilon|A|$ satisfy
\(d_\gamma(x,B)>(d_\gamma(A,B)+\epsilon)|B| \).
The same holds after interchanging $A$ and $B$.
\end{lemma}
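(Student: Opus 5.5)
The plan is the standard counting argument from the definition of $\epsilon$-regularity, applied to a fixed color $\gamma$. Here $d_\gamma(x,B)$ is read as the number of $\gamma$-colored edges from $x$ to $B$, i.e.\ $e_\gamma(\{x\},B)$. Write $d:=d_\gamma(A,B)$.

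\emph{Low-degree vertices.} Let
\[
X:=\{x\in A:\ e_\gamma(\{x\},B)<(d-\epsilon)|B|\}.
\]
Suppose, for contradiction, that $|X|\ge\epsilon|A|$. Take $Y:=B$. Then $|Y|=|B|\ge\epsilon|B|$ trivially, so Definition~\ref{def:colored-regular-pair} applies to the pair $(X,Y)$ and gives
\[
|d_\gamma(X,B)-d|\le\epsilon.
\]
On the other hand, summing the degrees over $X$ gives
\[
e_\gamma(X,B)=\sum_{x\in X}e_\gamma(\{x\},B)<|X|(d-\epsilon)|B|.
\]
Hence $d_\gamma(X,B)<d-\epsilon$, which contradicts regularity. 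Therefore $|X|<\epsilon|A|$.

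\emph{High-degree vertices.} The argument is symmetric. Let $X'$ be the set of $x\in A$ with $e_\gamma(\{x\},B)>(d+\epsilon)|B|$. If $|X'|\ge\epsilon|A|$, the same summation gives $d_\gamma(X',B)>d+\epsilon$, again contradicting regularity. So $|X'|<\epsilon|A|$.

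\emph{Interchanging $A$ and $B$.} The definition of an $\epsilon$-regular pair is symmetric in the two sides, and $d_\gamma(A,B)=d_\gamma(B,A)$. So the same two arguments apply verbatim with the roles of $A$ and $B$ swapped.

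\emph{Main point of care.} There is no real obstacle. The only things to check are the interpretation of $d_\gamma(x,B)$ as a degree count, and the fact that $Y=B$ is an admissible subset, which holds because $\epsilon<1$. The inequalities are strict because the sums over $X$ and $X'$ are nonempty: $|X|\ge\epsilon|A|>0$ and likewise for $X'$.
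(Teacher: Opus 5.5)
Your proof is correct and is the same argument as the paper's: take the set $X$ of low-degree vertices with $Y=B$, note that $|X|\ge\epsilon|A|$ would force $d_\gamma(X,B)<d_\gamma(A,B)-\epsilon$ and contradict regularity, and handle the high-degree and swapped cases the same way. Your reading of $d_\gamma(x,B)$ as the color-$\gamma$ degree $e_\gamma(\{x\},B)$ is also the one the paper's proof implicitly uses.
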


\begin{proof}
Let $X\subseteq A$ be the set of vertices that for any $x\in X$, satisfy  $d_\gamma(x,B)<(d_\gamma(A,B)-\epsilon)|B|$. If $|X|\ge\epsilon|A|$, then \(d_\gamma(X,B)<d_\gamma(A,B)-\epsilon \), contradicting regularity. The proof of upper bound is identical, and symmetry gives the assertions for vertices of $B$.
\end{proof}

\begin{lemma}\label{lem:slicing}
Suppose $(A,B)$ is $\epsilon$-regular and $A'\subseteq A$, $B'\subseteq B$ satisfy
\(|A'|\ge\alpha|A| \), \(|B'|\ge\alpha|B|\), and $\alpha\ge\epsilon$.
Then for every color $\gamma$, \(|d_\gamma(A',B')-d_\gamma(A,B)|\le\epsilon \),
and $(A',B')$ is $(2\epsilon/\alpha)$-regular.
\end{lemma}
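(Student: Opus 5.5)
The plan is to apply Definition~\ref{def:colored-regular-pair} directly, working one color at a time; the density claim and the regularity claim each come from passing to subsets of subsets.

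\emph{Density claim.} Fix a color $\gamma$. Since $\alpha\ge\epsilon$, we have $|A'|\ge\alpha|A|\ge\epsilon|A|$ and $|B'|\ge\alpha|B|\ge\epsilon|B|$. The pair $(A',B')$ is therefore an admissible test pair for the $\epsilon$-regularity of $(A,B)$, and the definition gives $|d_\gamma(A',B')-d_\gamma(A,B)|\le\epsilon$ immediately.

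\emph{Regularity claim.} Set $\epsilon'=2\epsilon/\alpha$ and take $X\subseteq A'$, $Y\subseteq B'$ with $|X|\ge\epsilon'|A'|$ and $|Y|\ge\epsilon'|B'|$. Then
\[
|X|\ge \frac{2\epsilon}{\alpha}\,\alpha|A|=2\epsilon|A|\ge\epsilon|A|,
\]
and similarly $|Y|\ge\epsilon|B|$. So $(X,Y)$ is also a test pair for $(A,B)$, and $|d_\gamma(X,Y)-d_\gamma(A,B)|\le\epsilon$. Combining this with the density claim via the triangle inequality gives
\[
|d_\gamma(X,Y)-d_\gamma(A',B')|\le 2\epsilon\le \frac{2\epsilon}{\alpha}=\epsilon',
\]
where the last step uses $\alpha\le 1$. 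This inequality is required for every color simultaneously, and it holds because both estimates were derived for an arbitrary $\gamma$.

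\emph{Possible obstacle.} A degenerate case could arise if $\epsilon'\ge 1$, i.e.\ $\alpha\le 2\epsilon$. Then the only admissible $X,Y$ would be $A',B'$ themselves, or none if $\epsilon'>1$, so the condition holds trivially or vacuously. The argument above covers these cases anyway, so no separate treatment should be needed.

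One point deserves a note rather than a fix. The case $\alpha>1$ cannot occur for nonempty sets: $A'\subseteq A$ forces $|A'|\le|A|$, so $\alpha|A|\le|A'|$ implies $\alpha\le1$. I will state this explicitly, since the final inequality $2\epsilon\le 2\epsilon/\alpha$ depends on it.
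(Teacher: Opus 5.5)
Your proposal is correct and follows the paper's proof essentially line for line. The density claim holds because $A',B'$ are admissible test sets when $\alpha\ge\epsilon$, and the regularity claim follows from $|X|\ge 2\epsilon|A|$, $|Y|\ge 2\epsilon|B|$ plus the triangle inequality; your explicit remark that $\alpha\le 1$, which is needed for $2\epsilon\le 2\epsilon/\alpha$, makes precise a step the paper uses without comment.
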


\begin{proof}
The density estimate follows directly from regularity because $A'$ and $B'$ are large enough. Let
\(\epsilon'=2\epsilon/\alpha \).
If $X\subseteq A'$ and $Y\subseteq B'$ satisfy $|X|\ge\epsilon'|A'|$ and $|Y|\ge\epsilon'|B'|$, then
\(|X|\ge2\epsilon|A|\ge\epsilon|A| \), 
\(|Y|\ge2\epsilon|B|\ge\epsilon|B|\).
For every color $\gamma$, we have 
\begin{align*}
|d_\gamma(X,Y)-d_\gamma(A',B')|
&\le
|d_\gamma(X,Y)-d_\gamma(A,B)|+
|d_\gamma(A',B')-d_\gamma(A,B)|\\
&\le2\epsilon\le\epsilon'.
\end{align*}
Thus $(A',B')$ is $\epsilon'$-regular.
\end{proof}

For $h\ge1$ and $0<d\le1$, define constants recursively by
\begin{align}
\eta_1(d)&=1,
&\delta_1(d)&=1,
\label{eq:counting-base}\\
\eta_h(d)&=
\min\left\{
\frac d4,\frac{1}{4h},
\frac d4\eta_{h-1}(d/2)
\right\},
&
\delta_h(d)&=
\frac12\delta_{h-1}(d/2)
\left(\frac d2\right)^{h-1}
\quad(h\ge2).
\label{eq:counting-recursion}
\end{align}

\begin{lemma}\label{lem:colored-counting}
Let $\cH$ be an ordered graph on $1<\cdots<h$, let $\gamma\in[c]$, and let
\(V_1<\cdots<V_h\) be pairwise disjoint sets.  Suppose that, whenever $ij\in E(\cH)$, the pair $(V_i,V_j)$ is $\eta_h(d)$-regular and has color-$\gamma$ density at least $d$.  Then the number of color-$\gamma$, order-preserving copies of $\cH$ using one vertex from each $V_i$ is at least
\begin{equation}\label{eq:canonical-counting}
\delta_h(d)\prod_{i=1}^{h}|V_i|.
\end{equation}
\end{lemma}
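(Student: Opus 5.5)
Induction on $h$, following the recursion \eqref{eq:counting-recursion}: embed vertex $h$ (the last one) first, then apply the induction hypothesis inside its color-$\gamma$ neighbourhoods. For $h=1$ the graph has no edges, so every vertex of $V_1$ is a copy, and the count is $|V_1|=\delta_1(d)|V_1|$.

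For $h\ge 2$, let $N^-(h)=\{i<h: ih\in E(\cH)\}$; every neighbour of $h$ lies to its left, so $N^-(h)$ is its whole neighbourhood. Call $v\in V_h$ \emph{typical} if $d_\gamma(v,V_i)\ge (d-\eta_h(d))|V_i|$ for every $i\in N^-(h)$. Apply Lemma~\ref{lem:typical-degree} to each pair $(V_i,V_h)$ with $i\in N^-(h)$. Since $\eta_h(d)\le 1/(4h)$, at most $(h-1)\eta_h(d)|V_h|<|V_h|/4$ vertices are atypical, so at least $\tfrac34|V_h|\ge\tfrac12|V_h|$ vertices are typical.

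Fix a typical $v$ and set $V_i'=N_\gamma(v)\cap V_i$ for $i\in N^-(h)$ and $V_i'=V_i$ for the other $i<h$. Since $\eta_h(d)\le d/4$, we have $|V_i'|\ge (d-d/4)|V_i|\ge \tfrac d2|V_i|$ for every $i<h$. Put $\alpha=d/2$; this satisfies $\alpha\ge\eta_h(d)$. By Lemma~\ref{lem:slicing}, every edge pair $(V_i',V_j')$ with $i,j<h$ is then $(2\eta_h(d)/\alpha)=(4\eta_h(d)/d)$-regular, and $4\eta_h(d)/d\le \eta_{h-1}(d/2)$ because $\eta_h(d)\le\tfrac d4\eta_{h-1}(d/2)$. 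Its $\gamma$-density is at least $d-\eta_h(d)\ge d/2$.

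Apply the induction hypothesis to $\cH-h$ (ordered on $1<\dots<h-1$) with density parameter $d/2$. This gives at least
\[
\delta_{h-1}(d/2)\prod_{i<h}|V_i'|\ge \delta_{h-1}(d/2)\Bigl(\frac d2\Bigr)^{h-1}\prod_{i<h}|V_i|
\]
color-$\gamma$ copies. Each extends to a copy of $\cH$ by adding $v$: all edges at $v$ go to $V_i'$ and have color $\gamma$, and $v$ lies to the right of everything because $V_1<\cdots<V_h$. Different $v$ give different copies. Summing over the at least $\tfrac12|V_h|$ typical $v$ yields exactly $\delta_h(d)\prod_{i=1}^h|V_i|$.

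The main obstacle is the parameter bookkeeping, in particular checking that $\eta_{h-1}(d/2)$ is a legitimate regularity threshold at the next level. The recursion seems tailored so that $4\eta_h/d\le\eta_{h-1}(d/2)$ and $\alpha=d/2\ge\eta_h$ both hold. One edge case remains: Lemma~\ref{lem:slicing} requires $V_i'$ to be a large subset of a regular pair, and this can fail to be automatic when $i$ is not adjacent to $h$ but $j$ is. It is still fine, since $|V_i'|=|V_i|\ge\alpha|V_i|$ trivially. Also, if $N^-(h)$ is empty every vertex of $V_h$ is typical, and the same argument applies.
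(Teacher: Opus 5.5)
Your proof is correct and is essentially the paper's argument: induction on $h$ by embedding the last vertex $h$ first, then a union bound via Lemma~\ref{lem:typical-degree} to get more than $\frac12|V_h|$ typical vertices. After that you apply Lemma~\ref{lem:slicing} with $\alpha=d/2$, making the neighbourhood pairs $\eta_{h-1}(d/2)$-regular with color-$\gamma$ density at least $d/2$, and conclude with the recursion for $\delta_h(d)$. You also make explicit several checks the paper leaves implicit: that $\alpha=d/2\ge\eta_h(d)$, as Lemma~\ref{lem:slicing} requires; that the classes not adjacent to $h$ satisfy the size condition trivially; and that distinct typical vertices give distinct extended copies of $\cH$.
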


\begin{proof}
We use induction on $h$. When $h=1$, every choice of one vertex from $V_1$ is a canonical copy, so the number of copies is $|V_1|=\delta_1(d)|V_1|$. Assume $h\ge2$.
Let $J\subseteq[h-1]$ be the set of neighbors of $h$ in $\cH$. By Lemma~\ref{lem:typical-degree}, for each $j\in J$, fewer than $\eta_h(d)|V_h|$ vertices $x\in V_h$ have fewer than
\((d-\eta_h(d))|V_j|\ge\frac d2|V_j|\)
color-$\gamma$ neighbors in $V_j$, since $\eta_h(d)\le d/4$. By the union bound and $\eta_h(d)\leq \frac{1}{4h}$, at least
\[
(1-|J|\eta_h(d))|V_h|
\ge
(1-(h-1)/(4h))|V_h|
>
\frac12|V_h|
\]
vertices of $V_h$ are typical for every $j\in J$, that is, $x\in V_h$ is typical if $x$ has at least \((d-\eta_h(d))|V_j|\) color-$\gamma$ neighbors in $V_j$ for every $j\in J$.

Fix such a typical vertex $x$.  For $j<h$, set
\[
V_j'=
\begin{cases}
N_\gamma(x)\cap V_j,&j\in J,\\
V_j,&j\notin J.
\end{cases}
\]
Then $|V_j'|\ge \frac{d}{2}|V_j|$ for $j\in J$, and $V_j'=V_j$ otherwise. Consider an edge $ij$ of the induced ordered graph $\cH-h$. Both $V_i'$ and $V_j'$ have size at least $\frac{d}{2}$ times the corresponding original class. By Lemma~\ref{lem:slicing}, the density of color-$\gamma$ is at least
\[
d_\gamma(V_i',V_j') \ge d - \eta_h(d) \ge d - \frac{d}{4}\geq \frac{d}{2}.
\]
and that the pair is $ \frac{4 \eta_h(d)}{d}$-regular. By the $\eta_h(d)$ recursive definition, we have 
\(\eta_h(d) \le \frac{d}{4}\eta_{h-1}\left(\frac{d}{2}\right) \) and hence \(\frac{4\eta_h(d)}{d} \le \eta_{h-1}\left(\frac{d}{2}\right)\).
Hence, the pair $(V'_i,V'_j)$ is $\eta_{h-1}(\frac{d}{2})$-regular. Consider $\cH-h$ with density parameter $\frac{d}{2}$. By induction hypothesis, there are at least
\[
\delta_{h-1}\left(\frac{d}{2}\right)
\prod_{j<h}|V_j'|
\ge
\delta_{h-1}\left(\frac{d}{2}\right)
\left(\frac d2\right)^{|J|}
\prod_{j<h}|V_j|
\]
canonical copies extending to $x$. Since $\frac{d}{2}\le1$ and $|J|\le h-1$, this is at least
\[
\delta_{h-1}(\frac{d}{2})
\left(\frac d2\right)^{h-1}
\prod_{j<h}|V_j|.
\]

The number of typical vertices in $V_h$ exceeds $\frac12 |V_h|$. Summing the lower bound on canonical copies extending to each such typical vertex $x$, we obtain the total number of monochromatic ordered copies of $\mathcal{H}$, which are greater than 
\[\frac12 |V_h| \cdot \delta_{h-1}\left(\frac{d}{2}\right) \left(\frac{d}{2}\right)^{h-1} \prod_{j<h}|V_j|.\]
Rearranging the product terms separates the product of all cluster sizes, and thus the total copies are greater than 
\[\underbrace{\frac12 \delta_{h-1}\left(\frac{d}{2}\right) \left(\frac{d}{2}\right)^{h-1}}_{\delta_h(d)} \cdot \prod_{j=1}^h |V_j|.\]
The recursive definition
$\delta_h(d) = \frac12 \delta_{h-1}\left(\frac{d}{2}\right) \left(\frac{d}{2}\right)^{h-1}$ yields \(\delta_h(d)\prod_{j=1}^{h}|V_j|\) monochromatic $\mathcal{H}$ copies, as required.
\end{proof}

We next extract an ordered complete set of regular clusters from a regular refinement of consecutive intervals.

\begin{lemma}\label{lem:ordered-regular-clique}
Let $t\ge2$, $q=32t$, and suppose $0<\epsilon\le \frac{1}{16t}$. Let \(I_1<\cdots<I_q \)
be an equitable partition of $[n]$ into consecutive intervals and $n\ge q$. Let
\( \mathcal P=(V_0,V_1,\ldots,V_K)\)
be an $\epsilon$-regular equitable refinement with $K\ge q$, every cluster contained in some $I_j$. Then $\mathcal P$ contains clusters
\(W_1<\cdots<W_t\)
lying in distinct intervals \(I_j~(1\leq j\leq q)\) and forming pairwise $\epsilon$-regular pairs.
\end{lemma}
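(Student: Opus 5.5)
The plan is a Turán-type argument on an auxiliary ``irregularity graph'', with one cluster chosen per interval by averaging. Write $L$ for the common cluster size and $s_j$ for the number of clusters of $\mathcal P$ contained in $I_j$. Since every cluster lies in some $I_j$, we have $\sum_j s_j=K$. From $|V_0|<\epsilon n$ we get $KL>(1-\epsilon)n$, and since $|I_j|\le\lceil n/q\rceil$ we have $s_jL\le\lceil n/q\rceil$. Together these give the uniform upper bound $s_j\le (1+O(\epsilon)+O(q/n))\,K/q$, so no interval can hoard clusters.

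\textbf{Step 1: many rich intervals.} Call an interval \emph{rich} if $s_j\ge K/(2q)$. The non-rich intervals contain fewer than $q\cdot K/(2q)=K/2$ clusters, so the rich ones contain more than $K/2$. By the upper bound on each $s_j$, the number $g$ of rich intervals is at least roughly $q/2$, hence a constant multiple of $t$ because $q=32t$.

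\textbf{Step 2: one cluster per rich interval.} From each rich interval, choose one of its clusters uniformly and independently at random. A fixed unordered pair of clusters $(U,U')$ lying in distinct rich intervals $I_j,I_{j'}$ is selected with probability $1/(s_js_{j'})\le (2q/K)^2$. Since at most $\epsilon K^2$ pairs are irregular, the expected number of irregular pairs among the chosen clusters is at most $\epsilon K^2(2q/K)^2=4\epsilon q^2$. Fix a choice attaining at most this expectation.

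\textbf{Step 3: Turán.} Form the graph $\Gamma$ on the $g$ chosen clusters whose edges are the $\epsilon$-irregular pairs. By Turán's theorem (Caro--Wei form), $\Gamma$ has an independent set of size at least $g^2/(g+2e(\Gamma))$. The clusters of such a set lie in distinct intervals and are pairwise $\epsilon$-regular. Because the intervals are consecutive and each cluster sits inside one interval, listing them by interval index gives $W_1<\cdots<W_t$ in the required order.

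The main obstacle is making the constants close, i.e.\ showing $g^2/(g+2e(\Gamma))\ge t$ with $q=32t$ and $\epsilon\le 1/(16t)$. A crude bound $e(\Gamma)\le4\epsilon q^2$ gives only $\Theta(t)$ with a possibly too-small constant. The first fix to try is to sharpen Steps 1--2: use the threshold $s_j\ge(1-\theta)K/q$ for a small $\theta$, so that almost all $q$ intervals are rich and $1/(s_js_{j'})\approx q^2/K^2$. That improves the expected count to about $\epsilon q^2\le 2q$, making the average degree of $\Gamma$ at most about $4$ and the independent set of size about $q/5\ge t$. To justify almost all intervals being rich, I would use $\sum_j s_j=K$, the upper bound $s_j\le(1+o(1))K/q$, and $|V_0|<\epsilon n$; the latter shows only $O(\epsilon q)$ intervals can lose a noticeable fraction of their clusters to the exceptional set. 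If the rounding terms from $\lceil n/q\rceil$ cause trouble when $n$ is near $q$, I would absorb them by noting that $K\ge q$ forces $L\ge1$, and then handle small $n$ directly.
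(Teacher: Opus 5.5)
\textbf{There is a genuine gap.} Your scheme (pick one cluster per rich interval at random, then apply Caro--Wei to the irregularity graph) differs from the paper's, but as written it does not prove the lemma. The quantitative step, which is the whole content of the lemma, is left open. You correctly note that the crude version fails: with $g\approx q/2$ and $e(\Gamma)\le 4\epsilon q^2\le 8q$, Caro--Wei gives only about $t/2$.

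Your refinement rests on $s_j\le(1+o(1))K/q$. But the error term you derived is $O(q/n)$, and the lemma is asserted for every $n\ge q$. When $n/q$ is bounded, the interval sizes $\lfloor n/q\rfloor$ and $\lceil n/q\rceil$ differ by a constant factor, and ``almost all intervals are rich'' is false. Concretely, take $n=3q/2$, $V_0=\emptyset$ and singleton clusters. Then $L=1$, $K=3q/2\ge q$, and every pair is trivially $\epsilon$-regular. Here $K/q=3/2$, and the $q/2$ intervals of size $1$ have $s_j=1<(1-\theta)K/q$ for every $\theta<1/3$. So only half the intervals are rich, and the estimate $1/(s_js_{j'})\approx q^2/K^2$ is unavailable. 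The conclusion is trivial in this particular example, but it shows your Step~1 cannot be justified as you propose. Moreover, ``handle small $n$ directly'' would have to cover the whole range $q\le n\le C_\theta q$, not just $n$ near $q$, and no argument is given for it.

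\textbf{How the paper avoids this.} It does not pre-select one cluster per interval. It builds a single conflict graph $F$ on all $K$ clusters, joining two clusters if the pair is irregular \emph{or} they lie in the same interval. Then only the crude bound $s_j\le |I_j|/L\le (2n/q)/(n/(2K))=4K/q$ is needed, and it holds for all $n\ge q$. This gives $\sum_j\binom{s_j}{2}\le\frac12(\max_j s_j)K\le 2K^2/q$, so $e(F)\le\epsilon K^2+2K^2/q\le K^2/(8t)$. Caro--Wei with $K\ge 32t$ then yields $\alpha(F)\ge K^2/(2e(F)+K)\ge 32t/9>t$. Independence in $F$ automatically enforces distinct intervals.

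\textbf{Repairing your own scheme.} Measure richness relative to the interval itself, i.e.\ call $I_j$ rich if $|I_j\cap V_0|\le\theta|I_j|$. Then:
\begin{itemize}
\item fewer than $\epsilon n/(\theta\lfloor n/q\rfloor)$ intervals are non-rich;
\item rich intervals have $s_j\ge(1-\theta)\lfloor n/q\rfloor/L$, while $K\le n/L$;
\item so the expected number of selected irregular pairs is at most $\epsilon\,(n/\lfloor n/q\rfloor)^2/(1-\theta)^2$.
\end{itemize}
Treat $\lfloor n/q\rfloor=1$ separately (there $L=1$), and take $\theta=1/4$ otherwise. With this, the Caro--Wei bound does exceed $t$ for all $t\ge2$, though with thin margins at $t=2$.
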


\begin{proof}
The partition $\mathcal{P}$ has an exceptional set $V_0$ with $|V_0|<\varepsilon n$, and all non-exceptional clusters $V_1,\dots,V_K$ have equal size $s$. We have
\[
    s = \frac{n-|V_0|}{K} \ge \frac{n-\varepsilon n}{K} \ge \frac{n}{2K},
\]
where we use $\varepsilon\le 1/4$. Since the intervals $I_j$ are equally sized partitions of $[n]$ and $n\ge q$, we have $|I_j|\le \frac{2n}{q}$. Let $g_j$ denote the number of regular clusters contained in interval $I_j$. Then
\[
    g_j \le \frac{|I_j|}{s} \le \frac{2n/q}{n/(2K)} = \frac{4K}{q},
\]
so $\max_j g_j \le \frac{ 4K}{q}$.
We now construct a conflict graph $F$ whose vertex set is exactly the set of regular clusters $\{V_1,\dots,V_K\}$. We place an edge between two distinct clusters in $F$ if and only if either the pair is not $\varepsilon$-regular, or the two clusters lie in the same interval $I_j$. By construction, any independent set in $F$ corresponds to a collection of clusters such that every pair is $\varepsilon$-regular and every cluster belongs to a distinct interval $I_j$. Therefore, it suffices to prove that $F$ has an independent set of size at least $t$.
We bound the total number of edges in $F$. The number of non-$\varepsilon$-regular pairs is at most $\varepsilon K^2$ by the property of equitable regular partitions. The number of edges that joining two clusters in the same interval is $\sum_{j=1}^q \binom{g_j}{2}$. Furthermore,
\[
    \sum_{j=1}^q\binom{g_j}{2} \le \frac12\left(\max_j g_j\right)\sum_{j=1}^q g_j \le \frac12\cdot \frac{4K}{q}\cdot K = \frac{2K^2}{q}.
\]
Substituting $\varepsilon\le \frac{1}{16t}$ and $q=32t$, we have 
\[
    e(F) \le \varepsilon K^2 + \frac{2K^2}{q}
    \le \frac{K^2}{16t}+\frac{2K^2}{32t}
    = \frac{K^2}{8t}.
\]
We now prove a general lower bound on the independence number $\alpha(F)$ of any graph on $K$ vertices:
\[
    \alpha(F)\ge \frac{K^2}{2e(F)+K}.
\]
We establish this bound via a random permutation argument. Denote by \(\Omega\) the set of all \(K!\) permutations of \(V(F)\), and let every permutation is equally likely. A random permutation \(\sigma \in \Omega\) is chosen uniformly. For each vertex \(v \in V(F)\), define the event
\(A_v = \{\sigma \in \Omega : \sigma(v) < \sigma(u) \text{ for every } u \in N(v)\} \),
where \(\sigma(v)\) denotes the position of \(v\) in the permutation. Define the indicator random variable \(X_v\) depending on the random permutation \(\sigma\) by
\[
X_v(\sigma) =
\begin{cases} 
1, & \sigma \in A_v, \\
0, & \sigma \notin A_v.
\end{cases}
\]
That is, \(X_v\) is a \(0\)-\(1\) random variable that depends on the random permutation \(\sigma\). For a fixed permutation \(\sigma\), define the random subset
\( S(\sigma) = \{ v \in V(F) : X_v(\sigma) = 1 \}\).
Thus \(S(\sigma)\) consists of those vertices that appear before all their neighbors in the permutation \(\sigma\).  Note that, for every \(\sigma\), the set \(S(\sigma)\) is an independent set in \(F\). Indeed, if two adjacent vertices \(u,v\) were both in \(S(\sigma)\), then one of them, say \(u\), appears before \(v\) in \(\sigma\). Since \(u\) is a neighbor of \(v\), the vertex \(v\) would not appear before all its neighbors, contradicting \(v \in S(\sigma)\). Hence no edge has both endpoints selected. 
Therefore, for every \(\sigma\), \(\alpha(F) \ge |S(\sigma)| \). Taking expectations over the uniform random permutation \(\sigma\),
\begin{equation}\label{equation1}
\alpha(F) \ge \mathbb{E}[|S|]  
\end{equation}
Now, for any fixed \(\sigma\), we have the trivial identity
\(|S(\sigma)| = \sum_{v \in V(F)} X_v(\sigma) \),
because the right-hand side exactly counts the vertices that satisfy \(X_v=1\).  
By linearity of expectation,
\begin{equation}\label{equation2}
\mathbb{E}[|S|] = \sum_{v \in V(F)} \mathbb{E}[X_v] = \sum_{v \in V(F)} \mathbb{P}(A_v).    
\end{equation}
For a fixed vertex \(v\), consider the closed neighbor \(N[v] = N(v) \cup \{v\}\), which has size \(d(v)+1\).   
The event \(A_v\) occurs exactly when \(v\) is the first element among \(N[v]\). Hence
\[
\mathbb{P}(A_v) = \frac{1}{d(v)+1}.
\]
Combining \eqref{equation1} and \eqref{equation2}, we have
\[
\alpha(F) \ge \sum_{v \in V(F)} \frac{1}{d(v)+1}.
\]
We now apply the Cauchy--Schwarz inequality:
\[
    \left(\sum_{v}(d(v)+1)\right)\left(\sum_{v}\frac{1}{d(v)+1}\right)\ge \left(\sum_{v}1\right)^2=K^2.
\]
Rearranging yields
\[
    \sum_{v}\frac{1}{d(v)+1}\ge \frac{K^2}{\sum_{v}(d(v)+1)}.
\]
By the handshaking lemma, $\sum_v d(v)=2e(F)$, so
\(\sum_{v}(d(v)+1)=2e(F)+K \).
We obtain the independence bound
\[
    \alpha(F)\ge \frac{K^2}{2e(F)+K}.
\]
Substitute our edge bound $e(F)\le K^2/(8t)$ and use $K\ge q=32t$:
\[
    2e(F)+K \le \frac{K^2}{4t}+\frac{K^2}{32t}=\frac{9K^2}{32t}.
\]
Therefore,
\[
    \alpha(F)\ge \frac{K^2}{\frac{9K^2}{32t}}=\frac{32t}{9}>t.
\]
Hence $F$ contains an independent set of size at least $t$. By the definition of the conflict graph, these $t$ clusters are pairwise $\varepsilon$-regular and lie in pairwise distinct intervals $I_j$. Ordering them by their interval positions yields the desired sequence $W_1<W_2<\dots<W_t$. This completes the proof.
\end{proof}

A reduced coloring assigns to each selected cluster pair a color of maximum density.  Since the densities sum to one, every selected color has density at least $1/c$.

\begin{theorem}\label{thm:regular-lifting}
Let \(c\ge2\), let \(\mathcal G_1,\ldots,\mathcal G_c\) be fixed
ordered graphs, and let \(\blambda\in(0,\infty)^c\).
Fix \(t\ge R\), and set
\(J_t=\{i\in[c]:h_i\le t\} \), \(\delta_i=\delta_{h_i}(1/c)~(i\in J_t)\).
Then there exist \(\varepsilon,\alpha>0\) and \(n_0\) such that, for
\(n\ge n_0\), every \(c\)-edge-coloring of \(\cK_n\) contains clusters
\(W_1<\cdots<W_t, |W_j|\ge\alpha n
\),
which are pairwise \(\varepsilon\)-regular. Color each reduced edge \(W_uW_v\) by a color of maximum density.
Let \(\mathcal Q_i\) be the family of correctly colored reduced copies
of \(\mathcal G_i\), and let \(T_i^{\rm reg}\) count their color-\(i\) copies of  \(\mathcal G_i\) meeting every selected cluster in at most one vertex. Then
\begin{equation}\label{eq:regularity-reduced-weighted}
T_i^{\rm reg}\ge
\delta_i\sum_{F\in\mathcal Q_i}
\prod_{W_j\in V(F)}|W_j|
\qquad (i\in J_t),
\end{equation}
and
\begin{equation}\label{eq:regularity-color}
\sum_{i\in J_t}
\frac{\lambda_iT_i^{\rm reg}}
{\delta_i(\alpha n)^{h_i}}
\ge
\oM_{\blambda}
(t;\mathcal G_1,\ldots,\mathcal G_c).
\end{equation}
\end{theorem}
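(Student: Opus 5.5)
The plan has three stages: choose constants, extract ordered regular clusters, then apply the counting lemma and the definition of $\oM_{\blambda}(t;\ldots)$ to the reduced coloring.

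\textbf{Constants and clusters.} Let $h_{\max}=\max_{i\in J_t}h_i\le t$ and $q=32t$. Choose
\[
\varepsilon=\min\Bigl\{\tfrac{1}{16t},\ \min_{i\in J_t}\eta_{h_i}(1/c)\Bigr\},
\]
which is positive and depends only on $t,c$ and the $h_i$. Partition $[n]$ into $q$ consecutive intervals $I_1<\cdots<I_q$ of nearly equal sizes; this is an equitable partition into at most $q$ parts. Theorem~\ref{thm:multicolor-regularity} with $m=q$ and this $\varepsilon$ gives an $\varepsilon$-regular equitable refinement $(V_0,V_1,\ldots,V_K)$ with $q\le K\le M(c,q,\varepsilon)$, each cluster inside some $I_j$. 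Lemma~\ref{lem:ordered-regular-clique} then yields clusters $W_1<\cdots<W_t$ in distinct intervals that are pairwise $\varepsilon$-regular. Each cluster has size $s\ge n/(2K)\ge n/(2M)$, so $\alpha=1/(2M)$ works. Take $n_0$ large enough for Theorem~\ref{thm:multicolor-regularity} and with $n_0\ge q$. Since the $W_j$ lie in distinct consecutive intervals, every vertex of $W_u$ precedes every vertex of $W_v$ for $u<v$. Hence choosing one vertex per cluster respects the order of the reduced copy, which is the point of the interval partition.

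\textbf{Proof of \eqref{eq:regularity-reduced-weighted}.} Fix $i\in J_t$ and $F\in\mathcal Q_i$, with vertex clusters $W_{j_1}<\cdots<W_{j_{h_i}}$ in the order of $\cG_i$. Each edge of $\cG_i$ maps to a reduced edge coloured $i$, so that pair has colour-$i$ density at least $1/c$. Since all pairs are $\varepsilon$-regular and $\varepsilon\le\eta_{h_i}(1/c)$, Lemma~\ref{lem:colored-counting} (with $d=1/c$) gives at least $\delta_i\prod_{W_j\in V(F)}|W_j|$ colour-$i$ order-preserving copies with one vertex in each $W_{j_\ell}$. These copies meet every selected cluster in at most one vertex. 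Distinct $F$ have distinct vertex-cluster sets, so their copies are disjoint as vertex sets. Summing over $F\in\mathcal Q_i$ gives \eqref{eq:regularity-reduced-weighted}.

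\textbf{Proof of \eqref{eq:regularity-color}.} The reduced coloring is a $c$-colouring of $\cK_t$, so by definition
\[
\sum_i\lambda_i|\mathcal Q_i|\ge\oM_{\blambda}(t;\cG_1,\ldots,\cG_c).
\]
Indices $i\notin J_t$ contribute nothing, since a graph on more than $t$ vertices has no copy in $\cK_t$. For $i\in J_t$, each factor satisfies $|W_j|\ge\alpha n$, so \eqref{eq:regularity-reduced-weighted} gives $T_i^{\rm reg}\ge\delta_i|\mathcal Q_i|(\alpha n)^{h_i}$. Dividing by $\delta_i(\alpha n)^{h_i}$, multiplying by $\lambda_i$ and summing over $J_t$ yields \eqref{eq:regularity-color}.

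\textbf{Main obstacle.} The only genuinely delicate point is the order compatibility: the clusters must be totally ordered blocks so that copies from Lemma~\ref{lem:colored-counting} are order-preserving. The interval-refinement construction resolves this. I would also check that $\varepsilon$ satisfies both $\varepsilon\le 1/(16t)$ for Lemma~\ref{lem:ordered-regular-clique} and $\varepsilon\le 1/4$ for its size estimate; both follow from $\varepsilon\le 1/(16t)$ with $t\ge 2$.
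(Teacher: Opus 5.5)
Your proposal is correct and matches the paper's proof essentially step for step: the same constants ($\varepsilon\le\min\{1/(16t),\min_{i\in J_t}\eta_{h_i}(1/c)\}$, $q=32t$, $\alpha=1/(2M)$, $n_0\ge q$), the interval partition refined via Theorem~\ref{thm:multicolor-regularity}, Lemma~\ref{lem:ordered-regular-clique} to extract $W_1<\cdots<W_t$, Lemma~\ref{lem:colored-counting} with $d=1/c$ for each $F\in\mathcal Q_i$, and the definition of $\oM_{\blambda}(t;\cG_1,\ldots,\cG_c)$ applied to the reduced coloring. One phrase needs fixing: ``disjoint as vertex sets'' is not literally true, since copies lifted from different $F$ can share vertices; they are nevertheless distinct copies, because the set of clusters a lifted copy meets is exactly $V(F)$, and that is all the summation needs.
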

\begin{proof}
Let \(\eta_h(d)\) be the regularity constant in the Lemma \ref{lem:colored-counting}. Choose
\[0<\varepsilon\le
\min\left\{
\frac{1}{16t},
\min_{i\in J_t}\eta_{h_i}(1/c)
\right\}, \]  $q=32t$.
Let \(M=M(c,q,\varepsilon)\) and \(N=N(c,q,\varepsilon)\) be the constants given by Theorem \ref{thm:multicolor-regularity}, and set \(\alpha=\frac{1}{2M}\), $n_0=\max\{N,q\}$.

Fix \(n\ge n_0\) and a \(c\)-edge-coloring of \(\cK_n\). Partition
\([n]\) equitably into consecutive intervals
$I_1<\cdots<I_q$.
Applying Theorem~\ref{thm:multicolor-regularity} with this partition and lower bound \(q\)
gives an equitable partition
\([n]=V_0\cup V_1\cup\cdots\cup V_K\),
where \(q\le K\le M \) and $|V_0|<\varepsilon n$, every \(V_\ell\), \(\ell\in[K]\), is contained in some \(I_a\), and all clusters but not $V_0$ have the same size 
\(|V_1|=\cdots=|V_K|=s \). By Lemma~\ref{lem:ordered-regular-clique} there are distinct indices \(a_1,\ldots,a_t\in[K]\) such that the whole clusters
\(V_{a_1},\ldots,V_{a_t}\) lie in distinct intervals and form pairwise
\(\varepsilon\)-regular pairs. Relabeling them according to their
positions, write
$W_1<\cdots<W_t$.
Since each \(W_j\) is one of the original nonexceptional clusters,
\[
|W_j|=s=\frac{n-|V_0|}{K}
\ge \frac{(1-\varepsilon)n}{M}
\ge \frac{n}{2M}
=\alpha n.
\]

Color the edge \(uv\) of the reduced ordered complete graph on \([t]\)
by a color of maximum density between \(W_u\) and \(W_v\). Since the \(c\) color densities sum to one, the selected
density is at least \(1/c\). The resulting reduced graph is a
\(c\)-edge-coloring of \(K_t\). Its number of correctly colored copies
of \(\mathcal G_i\) is \(|\mathcal Q_i|\); moreover,
\(\mathcal Q_i=\varnothing\) whenever \(i\notin J_t\). 
Now fix \(i\in J_t\) and \(F\in\mathcal Q_i\), it is clearly $h_i\leq t$. Let
\(W_{a_1}<\cdots<W_{a_{h_i}}\) be the clusters corresponding to the ordered vertices of \(F\). For
every edge \(uv\in E(\mathcal G_i)\), the corresponding cluster pair is
\(\varepsilon\)-regular and has color-\(i\) density at least \(1/c\).
Since
$\varepsilon\le\eta_{h_i}(1/c)$,
this pair is also \(\eta_{h_i}(1/c)\)-regular. Lemma~\ref{lem:colored-counting} therefore gives at least
\[
\delta_{h_i}(1/c)
\prod_{W_j\in V(F)}|W_j|
=
\delta_i\prod_{W_j\in V(F)}|W_j|
\]
color-\(i\), order-preserving copies of \(\mathcal G_i\).

Distinct reduced copies have distinct ordered cluster, so their
color-\(i\), ordered preserving copies of \(\mathcal G_i\) are disjoint. Summing over \(F\in\mathcal Q_i\) gives
\[
T_i^{\rm reg}
\ge
\delta_i
\sum_{F\in\mathcal Q_i}
\prod_{W_j\in V(F)}|W_j|,
\]
which proves \eqref{eq:regularity-reduced-weighted}.

Finally, since every \(F\in\mathcal Q_i\) uses \(h_i\) selected clusters
and each selected cluster has size at least \(\alpha n\),
\(T_i^{\rm reg}
\ge
\delta_i|\mathcal Q_i|(\alpha n)^{h_i}\).
Furthermore,
\[
|\mathcal Q_i|
\le
\frac{T_i^{\rm reg}}
     {\delta_i(\alpha n)^{h_i}}.
\]
By the definition of the weighted multiplicity, we have 
\(\sum_{i\in J_t}\lambda_i|\mathcal Q_i|
\ge
\oM_{\blambda}
(t;\mathcal G_1,\ldots,\mathcal G_c)\). Multiplying by \(\lambda_i\), summing over \(i\in J_t\), we obtain
\[
\sum_{i\in J_t}
\frac{\lambda_iT_i^{\rm reg}}
     {\delta_i(\alpha n)^{h_i}}
\ge
\oM_{\blambda}
(t;\mathcal G_1,\ldots,\mathcal G_c),
\]
which proves \eqref{eq:regularity-color}.
\end{proof}

\section*{Declaration on the use of AI}
ChatGPT was used for grammar checking, language polishing, and improving the clarity of the exposition. The core ideas and overall proof strategy were developed by the authors. The authors take full responsibility for the accuracy and originality of the paper.

\section*{Data availability}
No datasets were generated or analyzed in this study.

\end{document}